 \patchcmd\Gread@eps{\@inputcheck#1 }{\@inputcheck"#1"\relax}{}{}
\patchcmd\Gread@eps{\@inputcheck#1 }{\@inputcheck"#1"\relax}{}{}
 \patchcmd\Gread@eps{\@inputcheck#1 }{\@inputcheck"#1"\relax}{}{}
\newcommand{\intav}[1]{\mathchoice {\mathop{\vrule width 6pt height 3 pt depth  -2.5pt
\kern -8pt \intop}\nolimits_{\kern -6pt#1}} {\mathop{\vrule width
5pt height 3  pt depth -2.6pt \kern -6pt \intop}\nolimits_{#1}}
{\mathop{\vrule width 5pt height 3 pt depth -2.6pt \kern -6pt
\intop}\nolimits_{#1}} {\mathop{\vrule width 5pt height 3 pt depth
-2.6pt \kern -6pt \intop}\nolimits_{#1}}}
\def\polhk#1{\setbox0=\hbox{#1}{\ooalign{\hidewidth\lower1.5ex\hbox{`}\hidewidth\crcr\unhbox0}}}
\newcommand{\dx}{\operatorname{d}}
\renewcommand{\dx}{\operatorname{d}}
\newtheorem{teo}{Theorem}
\newtheorem{Proposition}{Proposition}
\newtheorem{Remark}{Remark}
\newtheorem{Assumption}{A}
\begin{document}

\title{Geometric regularity for elliptic equations in double-divergence form}
\author{Raimundo Leit\~ao, Edgard A. Pimentel and Makson S. Santos}

\date{\today} 

\maketitle

\begin{abstract}

In this paper, we examine the regularity of the solutions to the double-divergence equation. We establish improved H\"older continuity as solutions approach their zero level-sets. In fact, we prove that $\alpha$-H\"older continuous coefficients lead to solutions of class $\mathcal{C}^{1^-}$, locally. Under the assumption of Sobolev differentiable coefficients, we establish regularity in the class $\mathcal{C}^{1,1^-}$. Our results unveil improved continuity along a nonphysical free boundary, where the weak formulation of the problem vanishes. We argue through a geometric set of techniques, implemented by approximation methods. Such methods connect our problem of interest with a target profile. An iteration procedure imports information from this limiting configuration to the solutions of the double-divergence equation.

\noindent 
\medskip

\noindent \textbf{Keywords}: Double-divergence equations; geometric regularity; improved regularity at zero level-sets.

\medskip 

\noindent \textbf{MSC(2010)}: 35B65; 35J15.
\end{abstract}

\vspace{.1in}

\section{Introduction}\label{introduction}
In the present paper we study the regularity theory for solutions to the double-divergence partial differential equation (PDE)
\begin{equation}\label{eq_main}
	\partial^2_{x_ix_j}\left(a^{ij}(x)u(x)\right)\,=\,0\;\;\;\;\;\mbox{in}\;\;\;\;\;B_1,
\end{equation}
where $(a^{ij})_{i,j=1}^d\in\mathcal{S}(d)$ is uniformly $(\lambda,\Lambda)$-elliptic. We produce new (sharp) regularity results for the solutions to \eqref{eq_main}. In particular, we are concerned with gains of regularity as solutions approach their zero level-sets. We argue through a genuinely geometric class of methods, inspired by the ideas introduced by L. Caffarelli in \cite{caffarelli}.

Introduced in \cite{littman2}, equations in the double-divergence form have been the object of important advances. See \cite{sjogren,bogachev1,pre1,pre2,fabesstroock}; see also the monograph \cite{bogachev2}. The interest in \eqref{eq_main} is due to its own mathematical merits, as well as to its varied set of applications.


The primary motivation for the study of \eqref{eq_main} is in the realm of stochastic analysis. In fact, \eqref{eq_main} is the Kolmogorov-Fokker-Planck equation associated with the stochastic process whose infinitesimal generator is given by
\[
	Lv(x)\,:=\,a^{ij}(x)\partial^2_{x_ix_j}v(x).
\]
Therefore, one can derive information on the stochastic process through the understanding of \eqref{eq_main}.

A further instance where double-divergence equations play a role is the fully nonlinear mean-field games theory. The model-problem here is
\begin{equation}\label{eq_mfg}
	\begin{cases}
	F(D^2V)\,=\,g(u)&\;\;\;\;\;\mbox{in}\;\;\;\;\;B_1\\
	\partial^2_{x_i x_j}\left(F^{ij}(D^2V)u(x)\right)\,=\,0&\;\;\;\;\;\mbox{in}\;\;\;\;\;B_1,
	\end{cases}
\end{equation}
where $F:\mathcal{S}(d)\to\mathbb{R}$ is a $(\lambda,\Lambda)$-elliptic operator, $F^{ij}(M)$ stands for the derivative of $F$ with respect to the entry $m_{i,j}$ of $M$ and $g:\mathbb{R}\to\mathbb{R}$ is a given function. In this case, the first equation in \eqref{eq_mfg} is a Hamilton-Jacobi, associated with an optimal control problem. Its solution $V$ accounts for the value function of the game. On the other hand, the population of players, whose density is denoted by $u$, solves a double-divergence (Fokker-Planck) equation. The mean-field coupling $g$ encodes the preferences of the players with respect to the density of the entire population. Therefore, the solution $u$ describes the equilibrium distribution of a population of rational players facing a scenario of strategic interaction. Through this framework, double-divergence equations are relevant in the modelling of several phenomena in the life and social sciences. As regards the mean-field games theory, we refer the reader to the monograph \cite{thebook}. 

A further application of equations in double-divergence form occurs in the theory of Hamiltonian stationary Lagrangian manifolds \cite{chen06}. Let $\Omega \subset \mathbb{R}^d$ be a domain and consider $u \in C^{\infty}(\Omega)$. The gradient graph of $u$ is the set 
\[
	\Gamma_u\,:=\, \left\lbrace \left(x, Du(x)\right), x \in \Omega\right\rbrace,
\]
whereas the volume of $\Gamma_u$ is given by
\[
	F_{\Omega}(u) = \int_{\Omega}\left( \det(I \,+\, (D^2u)^TD^2u)\right)^{1/2}\dx x.
\]
Given $\Omega \subset \mathbb{R}$, the study of critical points/minimizers for $F_{\Omega}(u)$ yields the compactly supported first variation
\begin{equation}\label{eq_geo}
	\int_{\Omega}\sqrt{\det g}g^{ij}\delta^{kl}u_{x_ix_k}\phi_{x_jx_l}\dx x = 0,
\end{equation}
for all $\phi \in C_c^{\infty}(\Omega)$, where
\[
	g\,:=\,I \,+\, (D^2u)^TD^2u
\] 
is the induced metric. It is easy to check that \eqref{eq_geo} is the weak (distributional)  formulation of
\[
	\partial^2_{x_jx_l}\left(\sqrt{\det g}g^{ij}\delta^{kl}u_{x_ix_k} \right) = 0\;\;\;\;\;\mbox{in}\;\;\;\;\;\Omega.
\]

Hence, given a domain, the minimizers of the volume of the gradient graph relate to the solutions of a PDE in the double-divergence form.

As mentioned before, the study of \eqref{eq_main} starts in \cite{littman2}. In that paper, the author considers weak solutions to the inequality
\begin{equation*}
	\partial^2_{x_ix_j}\left(a^{ij}(x)u(x)\right)\,\geq\,0\;\;\;\;\;\mbox{in}\;\;\;\;\;B_1,
\end{equation*}
and establishes a strong maximum principle. In \cite{pre1}, the author develops a potential theory associated with \eqref{eq_main}. This theory is shown to satisfy the same axioms as the potential theory for the elliptic operator
\[
	a^{ij}(\cdot)\partial^2_{x_ix_j}.
\]
Hence, the study of the former provides information on the latter. An improved maximum principle, as well as a preliminary approximation scheme for \eqref{eq_main} are the subject of \cite{pre2}.

It is only in \cite{sjogren} that the regularity for the solutions to \eqref{eq_main} is first investigated. In that paper, the author proves that solutions coincide with a continuous function, except in a set of measure zero. Together with its converse -- and under further conditions -- this is called the \emph{fundamental equivalence}. In addition, a result on the $\alpha$-H\"older continuity of the solutions is presented. Namely, solutions are proven to be locally $\alpha$-H\"older continuous provided the coefficients satisfy $a^{ij}\in\mathcal{C}^\alpha_{loc}(B_1)$.

In \cite{fabesstroock}, the authors examine properties of the Green's function associated with the operator driving \eqref{eq_main}. One of the results in that paper regards gains of integrability for the solutions. In fact, it is reported that locally integrable, non-negative, solutions are in $L^\frac{d}{d-1}_{loc}(B_1)$. 

A distinct approach to \eqref{eq_main} regards the study of the \textit{densities} of solutions. That is, their Radon-Nikodym derivatives with respect to the Lebesgue measure. In this realm, several developments have been produced (see \cite{bogachev2} and the references therein). For example it is widely known that, if $(a^{ij})_{i,j=1}^d$ is nondegenerate in $B_1$, every solution to \eqref{eq_main} has a density; see \cite{bogachev2}.

In \cite{bogachev3} the authors prove that $\det\left[(a^{ij})_{i,j=1}^d\right]u$ has a density in $L^\frac{d}{d-1}_{loc}(B_1)$, provided $u\geq 0$. If, in addition, $(a^{ij})_{i,j=1}^d$ is H\"older continuous and uniformly elliptic, $u$ is proven to have a density in $L^\frac{d}{d-1}_{loc}(B_1)$. Regularity in Sobolev spaces is also studied in \cite{bogachev3}. Under the assumptions that $(a^{ij})_{i,j=1}^d$ is in $W^{1,p}_{loc}(B_1)$ and $\det\left[(a^{ij})_{i,j=1}^d\right]$ is bounded away from zero, the authors prove that solutions have a density in $W^{1,p}_{loc}(B_1)$. It is worth noticing that \cite{bogachev3} addresses differential \emph{inequalities} of the form
\begin{equation*}\label{eq_boggen}
	\int_{B_1}a^{ij}(x)u(x)\phi_{x_ix_j}(x)dx\,\leq\,C\left\|\phi\right\|_{W^{1,\infty}(B_1)},
\end{equation*}
for some $C>0$. The corpus of results reported in \cite{bogachev3} refines important previous developments; see, for instance \cite{bogachev4,krylov1}.

In the recent paper \cite{bogachev1}, the authors consider densities of the solutions to \eqref{eq_main} and investigate their regularity in H\"older and Lebesgue spaces. In addition, they prove a Harnack inequality for non-negative solutions; see \cite[Corollary 3.6]{bogachev1}. Among other things, this result is relevant as it sets in the positive an open question raised in \cite{mamedov}. In fact, it is shown that densities are in $L^p_{loc}(B_1)$, for every $p\geq 1$, if $(a^{ij})_{i,j=1}^d\in VMO(B_1)$. Moreover, the authors examine the regularity of densities in H\"older spaces, provided the coefficients are in the same class. 

A remarkable feature of PDEs in the double-divergence form is the following: the regularity of $(a^{ij})_{i,j=1}^d$ acts as an upper bound for the regularity of the solutions. It means that gains of regularity are not (universally) available for the solutions, \emph{vis-a-vis} the data of the problem. To see this phenomenon in a (very) simple setting, we detail an example presented in \cite{bogachev1}. Set $d=1$ and consider the homogeneous problem
\begin{equation}\label{eq_example}
	\left(a(x)v(x)\right)_{xx}\,=\,0\;\;\;\;\;\mbox{in}\;\;\;\;\;]-1,1[.
\end{equation}
Take an arbitrary affine function $\ell:B_1\to\mathbb{R}$ and let $u(x):=\ell(x)/a(x)$. Notice that
\[
	\int_{B_1}a(x)\frac{\ell(x)}{a(x)}\phi_{xx}\dx x\,=\,0
\]
for every $\phi\in\mathcal{C}^2_0(]-1,1[)$. Therefore, $u$ is a solution to \eqref{eq_example}. It is clear that, if $a(x)$ is discontinuous, so will be $u$.

Although solutions lack gains of regularity in the entire domain, a natural question regards the conditions under which improvements on the H\"older continuity could be established. Let $S\subset B_1$ be a fixed subset of the domain and suppose that further, natural, conditions are placed on $\left(a^{ij}\right)_{i,j=1}^d\in\mathcal{C}^\beta_{loc}(B_1)$. An important information concerns the regularity of the solutions along $S$. Even more relevant in some settings is the regularity of the solutions as they \emph{approach} $S\subset B_1$. 

In this paper, we consider the zero level-set of the solutions to \eqref{eq_main}. That is, 
\[
	S_0[u]\,:=\,\left\lbrace x\,\in\,B_1\,:\,u(x)\,=\,0\right\rbrace.
\]	
We prove that, along $S_0$, solutions to \eqref{eq_main} are of class $\mathcal{C}^\alpha$ for every $\alpha\in(0,1)$, provided $\left(a^{ij}\right)_{i,j=1}^d$ is H\"older continuous and satisfies a proximity regime of the form
\[
	\left\|a^{ij}\,-\,a^{ij}(0)\right\|_{L^\infty(B_1)}\,\ll\,1/2.
\]
The precise statement of our first main result is the following:

\begin{teo}\label{thm_main}
Let $u\in\ L^1_{loc}(B_1)$ be a weak solution to \eqref{eq_main}. Suppose assumptions A\ref{assump_matrixa}-A\ref{assump_holder}, to be set forth in Section \ref{subsec_mainassump}, are in force. Let $x_0\in S_0(u)$. Then $u$ is of class $\mathcal{C}^{1-}$ at $x_0$ and there exists a constant $C>0$ such that
\[
 \sup_{B_r(x_0)}\left|u(x_0)\,-\,u(x)\right|\,\leq\,Cr^{\alpha^*},
\]
for every $\alpha^*\in(0,1)$.
\end{teo}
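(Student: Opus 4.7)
The plan is to deploy the Caffarelli-style geometric tangential machinery announced in the abstract: I would approximate $u$ by a solution of a constant-coefficient \emph{target} equation, transfer the smoothness of the target to a one-step oscillation decay at $x_0$, and then iterate this decay along the dyadic scales centered at $x_0$. By translation, assume $x_0=0$, so that $u(0)=0$; by a dilation combined with the H\"older continuity of $(a^{ij})$, one reduces the proof to the normalized setting
\[
\|u\|_{L^1(B_1)}\le 1,\qquad \|a^{ij}-a^{ij}(0)\|_{L^\infty(B_1)}\le\epsilon,
\]
for a small universal $\epsilon>0$ to be chosen below.

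The key ingredient I would need is an approximation lemma: for every $\delta>0$ there exists $\epsilon>0$ such that, under the normalization above, one can find $h$ solving the target equation
\[
a^{ij}(0)\,\partial^2_{x_ix_j}h \,=\, 0 \quad\text{in } B_{3/4},
\]
with $\|u-h\|_{L^\infty(B_{3/4})}\le \delta$. I would prove this by contradiction and compactness: from sequences $(u_n,a_n^{ij})$ violating the claim I would extract a uniformly convergent subsequence $u_n\to u_\infty$ using the a priori regularity results recalled in the Introduction (Sj\"ogren's H\"older estimate and the integrability gains of Fabes-Stroock and Bogachev-Shaposhnikov), and then pass to the limit in the distributional formulation to identify $u_\infty$ as a solution of the target equation. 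After a linear change of variables the target reduces to Laplace's equation, so the profile enjoys the a priori estimate
\[
\|h\|_{L^\infty(B_{1/2})} \,+\, \|\nabla h\|_{L^\infty(B_{1/2})}\,\le\, C(d,\lambda,\Lambda).
\]

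Fix $\alpha^*\in(0,1)$. Since $u(0)=0$ and $\|u-h\|_{L^\infty(B_{3/4})}\le\delta$, we have $|h(0)|\le\delta$; hence, by the Lipschitz bound,
\[
\sup_{B_\rho}|u|\,\le\,\sup_{B_\rho}|h|\,+\,\sup_{B_\rho}|u-h|\,\le\, C\rho \,+\, 2\delta,\qquad \rho\in(0,1/2).
\]
Choose first $\rho\in(0,1/2)$ with $C\rho\le\tfrac12\rho^{\alpha^*}$, then $\delta$ (hence $\epsilon$) with $2\delta\le\tfrac12\rho^{\alpha^*}$; the outcome is the one-step decay $\sup_{B_\rho}|u|\le\rho^{\alpha^*}$. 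To iterate, set
\[
u_k(x)\,:=\,\rho^{-k\alpha^*}\,u(\rho^k x), \qquad a_k^{ij}(x)\,:=\,a^{ij}(\rho^k x).
\]
Then $u_k$ satisfies a double-divergence equation whose coefficients $a_k^{ij}$ are \emph{closer} to $a^{ij}(0)$ thanks to the $\rho^k$-scaling and the H\"older continuity. An inductive application of the one-step decay, starting from $\sup_{B_1}|u_k|\le 1$, yields
\[
\sup_{B_{\rho^k}}|u|\,\le\, \rho^{k\alpha^*}\quad\text{for every }k\in\mathbb{N},
\]
and interpolation across dyadic annuli produces the announced estimate $\sup_{B_r}|u|\le C r^{\alpha^*}$ with $C=C(\alpha^*)$.

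The main obstacle is the approximation lemma --- more precisely, the compactness of solution families to \eqref{eq_main}. Because the distributional formulation places derivatives on the test functions, one cannot invoke the standard elliptic machinery; the uniform modulus of continuity needed to extract a convergent subsequence must be drawn from the a priori H\"older and $L^p$ bounds already recorded in the literature for double-divergence solutions, and the stability under $L^\infty$-convergence of the coefficients must be verified directly in the distributional sense. Once this compactness step is in place, both the one-step improvement and the dyadic iteration are rather routine.
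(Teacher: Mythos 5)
Your proposal follows essentially the same route as the paper: a compactness/contradiction approximation lemma (relying on the uniform H\"older a priori bounds and a sequential stability step in the distributional formulation), a one-step oscillation decay at the zero level-set point via the Lipschitz bound on the constant-coefficient profile, and an iteration of rescaled solutions $u(x_0+\rho^k x)/\rho^{k\alpha^*}$ followed by dyadic interpolation. Minor cosmetic differences aside (you normalize in $L^1$ and reduce to the proximity regime by an explicit dilation, while the paper normalizes in $L^\infty$ and builds $h(x_0)=0$ into the approximation lemma), the argument matches the paper's proof.
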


The contribution of Theorem \ref{thm_main} is to ensure gains of regularity for the solutions to \eqref{eq_main} \textit{as they approach the zero level-set}, though estimates in the whole domain are constrained by the regularity of the coefficients $a^{ij}$. From a heuristic viewpoint, whichever level of $\varepsilon$-H\"older continuity is available for the coefficients  -- with $0<\varepsilon\ll 1/2$ -- suffices to produce $\mathcal{C}^{1^-}$ regularity for the solutions along $S_0[u]$. 

The choice for $S_0$ is two-fold. Indeed, along this set, the weak formulation of \eqref{eq_main} vanishes. Hence, at least intuitively, the weak formulation of the problem fails to provide information on the original equation along $S_0[u]$. A remarkable feature of \eqref{eq_main} is related to this apparent lack of information across the zero level-set. As a matter of fact, the structure of the equation is capable of enforcing higher regularity of the solutions along the set where the weak formulation vanishes. 

A second instance of motivation for the choice of $S_0$ falls within the scope of the \emph{nonphysical free boundaries}. Introduced as a technology inspired by free boundary problems in the regularity theory of (nonlinear) partial differential equations, this class of methods has advanced the understanding of fine properties of solutions to a number of important examples. We refer the reader to \cite{edutei1}.

In addition to the study of \eqref{eq_main} in the presence of H\"older continuous coefficients, we also consider the case $\left(a^{ij}\right)_{i,j=1}^d\in W^{2,p}_{loc}(B_1)$, for $p>d$. In this setting, \eqref{eq_main} becomes
\begin{equation}\label{eq_newsob}
	\partial_{x_i}\left(a^{ij}(x)\partial_{x_j}u(x)\,+\,\partial_{x_j}a^{ij}(x)u(x)\right)\,=\,0\;\;\;\;\;\mbox{in}\;\;\;\;\;B_1.
\end{equation}
Here, two new layers of information are unveiled. First, it is known that solutions to \eqref{eq_newsob} are in $\mathcal{C}^{1,1-d/p}_{loc}(B_1)$ -- see \cite[Chapter 3, Theorem 15.1]{ladyural}. I.e., the gradient of the solutions exists in classical sense. Second, the weak formulation of the problem vanishes at a different subset of the domain, namely
\[
	S_1[u]\,:=\,\left\lbrace x\,\in\,B_1\,:\,u(x)\,=\,0\;\;\;\mbox{and}\;\;\;Du(x)\,=\,{\bf 0}\right\rbrace.
\]
Under the assumption $\left(a^{ij}\right)_{i,j=1}^d\in W^{2,p}_{loc}(B_1)$, and the appropriate proximity regime, we prove that solutions to \eqref{eq_main} are locally of class $\mathcal{C}^{1,1^-}$ along $S_1[u]$. This is the content of our second main result:

\begin{teo}[H\"older regularity of the gradient]\label{thm_main2}
Let $u\in\ L^1_{loc}(B_1)$ be a weak solution to \eqref{eq_main}. Suppose A\ref{assump_matrixa} and A\ref{assump_w2p}, to be introduced in Section \ref{subsec_mainassump}, hold true. Let $x_0\in S_1[u]$. Then $u$ is of class $\mathcal{C}^{1,1^-}$ at $x_0$ and there exists a constant $C>0$ such that
\[
	\sup_{x\in B_r(x_0)}\left|Du(x) - Du(x_0)\right| \leq Cr^{\alpha^*}
\]
for every $\alpha^*\in(0,1)$.
\end{teo}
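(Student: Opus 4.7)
My plan is to repeat the Caffarelli-style geometric iteration used for Theorem~\ref{thm_main}, but now with Laplace's equation as the tangent profile: since $p>d$, Morrey embedding places $a^{ij}$ in $\mathcal{C}^{1,1-d/p}$, the divergence-form rewriting \eqref{eq_newsob} is in force, and \cite[Ch.~3, Thm.~15.1]{ladyural} supplies a classical $\mathcal{C}^{1,1-d/p}$ interior estimate for solutions of \eqref{eq_main}. After translation, a linear change of coordinates, and a standard scaling, I may assume $x_0=0$, $a^{ij}(0)=\delta_{ij}$, $\|u\|_{L^\infty(B_1)}\le 1$, $u(0)=0$, $Du(0)=\mathbf{0}$, and that the smallness parameter in A\ref{assump_w2p} is as small as needed.

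The core is the following one-step improvement lemma, proved by compactness. Fix $\alpha^\ast\in(0,1)$: I claim there exist universal $\rho\in(0,1/2)$ and $\delta\in(0,1)$ such that if the normalized hypotheses above hold with $\|a^{ij}-\delta_{ij}\|_{W^{2,p}(B_1)}\le\delta$, then $\sup_{B_\rho}|u|\le\rho^{1+\alpha^\ast}$. Arguing by contradiction along sequences $(u_n,a_n^{ij})$ witnessing failure, the $\mathcal{C}^{1,1-d/p}$ estimate yields pre-compactness in $\mathcal{C}^1_{loc}$, and any cluster point $u_\infty$ of $(u_n)$ is harmonic in $B_{3/4}$ with $u_\infty(0)=0$, $Du_\infty(0)=\mathbf{0}$ (both inherited from $\mathcal{C}^1$-convergence). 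A second-order Taylor expansion of $u_\infty$ combined with the standard harmonic interior bound $\|D^2 u_\infty\|_{L^\infty(B_{1/2})}\le C\|u_\infty\|_{L^\infty(B_{3/4})}$ gives $\sup_{B_\rho}|u_\infty|\le C\rho^{2}$, which is strictly smaller than $\rho^{1+\alpha^\ast}$ once $\rho$ is fixed (depending only on $\alpha^\ast$) so that $C\rho^{1-\alpha^\ast}<1$. Passing to the limit produces the required contradiction.

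Next I iterate by zooming. Setting $v_1(x):=\rho^{-(1+\alpha^\ast)}u(\rho x)$, the function $v_1$ solves a double-divergence equation with rescaled coefficients $\tilde a^{ij}(x):=a^{ij}(\rho x)$ that still satisfy $\tilde a^{ij}(0)=\delta_{ij}$, while $\|v_1\|_{L^\infty(B_1)}\le 1$, $v_1(0)=0$, $Dv_1(0)=\mathbf{0}$ are preserved automatically. A direct computation gives
\[
\|\tilde a^{ij}-\delta_{ij}\|_{W^{2,p}(B_1)}\le C\|a^{ij}-\delta_{ij}\|_{L^\infty(B_\rho)}+\rho^{1-d/p}\|Da^{ij}\|_{L^p(B_\rho)}+\rho^{2-d/p}\|D^2 a^{ij}\|_{L^p(B_\rho)},
\]
so the smallness regime is preserved (indeed, strictly improves on the derivative parts) precisely because $p>d$ makes $1-d/p>0$ and $2-d/p>1$. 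Iterating $k$ times yields $\|u\|_{L^\infty(B_{\rho^k})}\le \rho^{k(1+\alpha^\ast)}$, from which routine dyadic interpolation gives $|u(x)|\le C|x|^{1+\alpha^\ast}$ on $B_{1/2}$.

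Finally, to upgrade this scalar decay to the claimed gradient oscillation, I apply the interior gradient estimate for \eqref{eq_newsob} on $B_{r/2}(x)\subset B_{3r/2}(0)$, with $r=|x|$, obtaining
\[
|Du(x)|\le C r^{-1}\|u\|_{L^\infty(B_r(x))}\le C r^{-1}(2r)^{1+\alpha^\ast}\le C\,r^{\alpha^\ast};
\]
since $Du(x_0)=\mathbf{0}$, this is exactly the target estimate. The principal technical obstacle is the careful propagation of smallness through the iteration: one must ensure that the universal constants entering the $\mathcal{C}^{1,1-d/p}$ interior estimate do not degenerate as the iteration proceeds, which is guaranteed only because $p>d$ makes both powers $1-d/p$ and $2-d/p$ strictly positive. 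This same gap between $1-d/p$ and $1$ is what forces the ceiling to be $\mathcal{C}^{1,1^-}$ and not $\mathcal{C}^{1,1}$.
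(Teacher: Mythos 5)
Your argument is essentially correct, but it follows a genuinely different route from the paper's. The paper proves a First Level-set Approximation Lemma (Proposition \ref{prop_approx2}) in the $\mathcal{C}^{1,\beta}$ topology -- using the $\mathcal{C}^{1,1-d/p}$ interior estimate of Proposition \ref{prop_lady} for compactness and Proposition \ref{prop_seqstab} for stability -- and then iterates the \emph{gradient} oscillation directly: Proposition \ref{prop_step1;2} gives $\sup_{B_\rho(x_0)}|Du-Du(x_0)|\le\rho^{\alpha}$ at a fixed scale, and the scaling $v_k(x)=u(x_0+\rho^k x)/\rho^{k(1+\alpha)}$ propagates this to all dyadic scales, the $W^{2,p}$ bounds on the rescaled coefficients being checked exactly as in your scaling computation. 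You instead normalize $a^{ij}(x_0)=\delta_{ij}$, iterate the \emph{scalar} decay $\sup_{B_{\rho^k}}|u|\le\rho^{k(1+\alpha^*)}$ against a harmonic tangent profile, and only at the end upgrade to the gradient estimate through a scaled interior $\mathcal{C}^1$ estimate for \eqref{eq_newsob} on balls $B_{r/2}(x)$ -- a step the paper never needs, since it carries $Du$ through the whole iteration. Your route buys a lighter iteration (only $L^\infty$ information is propagated, and the tangent equation is literally Laplace's), at the price of the a posteriori scalar-to-gradient upgrade, which requires the interior gradient estimate to hold with a constant uniform over the small balls $B_r(x)$; this is fine because the rescaled coefficients of \eqref{eq_newsob} on such balls have uniformly controlled $W^{1,p}$ norms, but it is an extra point that must be said. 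One small bookkeeping remark: under your rescaling the zero-order part of the smallness, $\|a^{ij}-\delta_{ij}\|_{L^\infty}$, is preserved rather than improved (only the derivative parts gain the factors $\rho^{1-d/p}$ and $\rho^{2-d/p}$), so the one-step lemma should be stated with the smallness measured as $\|a^{ij}-\delta_{ij}\|_{L^\infty(B_1)}+\|Da^{ij}\|_{L^p(B_1)}+\|D^2a^{ij}\|_{L^p(B_1)}\le\delta$ (as the paper effectively does, separating the proximity regime from the $W^{2,p}$ bounds) to avoid a spurious multiplicative constant accumulating along the iteration; with that adjustment the argument closes.
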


The regularity of the coefficients in Sobolev spaces is pivotal in establishing Theorem \ref{thm_main2}. Here, Sobolev differentiable coefficients switch the regularity regime of \eqref{eq_main} allowing for an alternative weak formulation of the problem. 

We remark that our methods accommodate equations with explicit dependence on lower order terms. I.e., 
\[
	\partial^2_{x_ix_j}\left(a^{ij}(x)u(x)\right)\,-\,\partial_{x_i}\left(b^i(x)u(x)\right)\,+\,c(x)u(x)=\,0\;\;\;\;\;\mbox{in}\;\;\;\;\;B_1,
\]
provided the vector field $b:B_1\to\mathbb{R}^d$ and the function $c:B_1\to\mathbb{R}$ are well-prepared. See Remarks \ref{remarklot1} and \ref{remarklot2}.

\begin{figure}[h]
\center

\psscalebox{.7 .7} 
{
\begin{pspicture}(0,-3.4)(13.2,3.4)
\definecolor{colour0}{rgb}{0.91764706,0.91764706,0.91764706}
\psframe[linecolor=black, linewidth=0.04, fillstyle=solid,fillcolor=colour0, dimen=outer, framearc=0.1](13.2,3.4)(0.0,-3.4)
\pscircle[linecolor=black, linewidth=0.03, fillstyle=solid, dimen=outer](6.4,-2.2){0.8}
\psbezier[linecolor=black, linewidth=0.04](1.6,-1.8)(4.0,0.6)(3.6,2.6)(3.6,1.0)(3.6,-0.6)(5.6,0.6)(6.4,-2.2)
\psline[linecolor=black, linewidth=0.04, linestyle=dashed, dash=0.17638889cm 0.10583334cm](1.2,-2.2)(7.6,-2.2)
\psline[linecolor=black, linewidth=0.08](6.4,-2.2)(8.0,-2.2)
\rput[bl](7.6,-3.0){\LARGE{$S_0[u]$}}
\pscircle[linecolor=black, linewidth=0.1, fillstyle=solid, dimen=outer](9.6,0.2){2.4}
\psline[linecolor=black, linewidth=0.1](9.2,0.2)(12.0,0.2)
\psline[linecolor=black, linewidth=0.1](9.2,0.2)(8.4,2.2)
\rput[bl](8.0,-0.6){\large{Almost-Lipschitz} 
}
\rput[bl](8.4,-1.0){
\large{decay to zero}}
\rput[bl](2.0,0.6){\LARGE{$u(x)$}}
\psline[linecolor=black, linewidth=0.1, linestyle=dashed, dash=0.17638889cm 0.10583334cm](9.2,0.2)(7.2,0.2)
\rput[bl](0.8,-3.0){\LARGE{$B_1$}}
\end{pspicture}
}
\caption{Almost-Lipschitz decay to zero: although the graph of the solutions to \eqref{eq_main} admits cusps in the presence of merely H\"older continuous coefficients, they approach their zero level-sets with $\mathcal{C}^\alpha$-regularity, for every $\alpha\in(0,1)$. It means that solutions reach the nonphysical free boundary in an almost-Lipschitz manner.}

\end{figure}
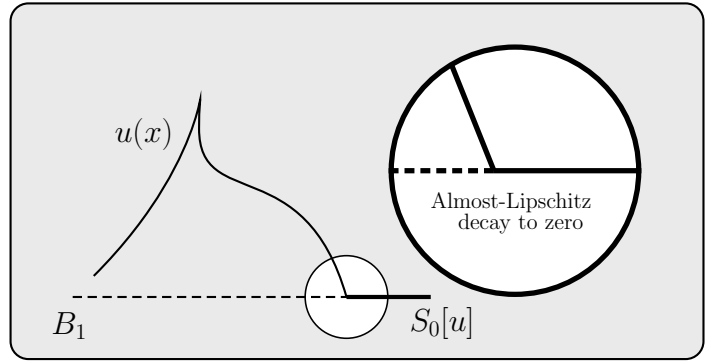

Our arguments are intrinsically geometric. We approximate weak solutions to \eqref{eq_main} by solutions to a homogeneous, fixed coefficients, equation of the form
\begin{equation}\label{eq_approximate}
	a^{ij}(0)\partial^2_{x_ix_j}v(x)\,=\,0\;\;\;\;\;\mbox{in}\;\;\;\;\;B_1.
\end{equation}

Among such solutions, we select $v$ such that $S_0[u]\subset S_0[v]$, and $S_1[v]\subset S_1[u]$, when appropriate. An approximation routine builds upon the regularity theory available for the solutions to \eqref{eq_approximate}. This is achieved through a geometric strategy, which produces a preliminary oscillation control. To turn this initial information into an oscillation control in every scale, an iterative method takes place. This line of reasoning is inspired by trail-blazing ideas first introduced in  \cite{caffarelli}. See also \cite{ccbook}.

The remainder of this paper is organized as follows: Section \ref{subsec_mainassump} details our main assumption whereas Section \ref{subsec_premres} collects a few elementary facts and notions, together with auxiliary results. In Section \ref{sec_proofmt} we put forward a zero level-set approximation lemma and present the proof of Theorem \ref{thm_main}. A finer approximation result appears in Section \ref{sec_proofthm2}, where we conclude the proof of Theorem \ref{thm_main2}.

\bigskip

\noindent {\bf Acknowledgements:} The authors are very grateful to an anonymous referee for her/his comments and suggestions, which led to a substantial improvement in the material in the paper. E. Pimentel is partially funded by CNPq-Brazil (Grants \#433623/2018-7 and \# 307500/2017-9) and FAPERJ (Grant \#E26/200.002/ 2018). M. Santos is supported by CAPES-Brazil. Part of this work was developed during the second and third authors' visit to the International Centre for Theoretical Physics (ICTP, Trieste); the authors are grateful for the Centre's support and hospitality. This study was financed in part by the Coordena\c{c}\~ao de Aperfei\c{c}oamento de Pessoal de N\'ivel Superior - Brazil (CAPES) - Finance Code 001"

\section{Preliminary material and main assumptions}\label{sec_mapm}

In this section we introduce the main elements used in our arguments throughout the paper. Firstly we discuss our assumptions on the structure of the problem. Then, we collect a few definitions and results. 

\subsection{Main assumptions}\label{subsec_mainassump}

In what follows, we detail the main hypotheses under which we work in the present paper. We start with an assumption on the uniform ellipticity of the coefficients matrix $(a^{ij})_{i,j=1}^d$.

\begin{Assumption}[Uniform ellipticity]\label{assump_matrixa}
We assume the symmetric matrix $(a^{ij}(x))_{i,\,j=1}^d$ satisfies a $(\lambda,\Lambda)$-ellipticity condition of the form 
\[
	\lambda Id\,\leq\,(a^{ij}(x))_{i,\,j=1}^d\,\leq\,\Lambda Id,
\]
for some $0<\lambda\leq\Lambda$, uniformly in $x\in B_1$.
\end{Assumption}

The next assumption concerns the regularity requirements on the coefficients to ensure H\"older continuity of the solutions to \eqref{eq_main}. This fact is central in the proof of Theorem \ref{thm_main}.

\begin{Assumption}[$\alpha$-H\"older continuity]\label{assump_holder} The map $(a^{ij}(x))_{i,\,j=1}^d:B_1\to\mathcal{S}(d)$ is locally uniformly $\alpha$-H\"older continuous. That is, we have 
\[
	a^{ij}\,\in\,\mathcal{C}_{loc}^\alpha(B_1)
\]
for every $1\leq i\leq d$ and $1\leq j\leq d$.
\end{Assumption}



We conclude this section with a further set of conditions on the coefficients $a^{ij}$. Such assumption unlocks the study of the gradient-regularity for the solutions to \eqref{eq_main}, along $S_1[u]$.

\begin{Assumption}[Sobolev differentiability of the coefficients]\label{assump_w2p} Let $p>d$. The map 
\[
	\left(a^{ij}\right)_{i,\,j=1}^d:B_1\to\mathcal{S}(d)
\]
is in $W^{2,p}_{loc}(B_1)$. That is, we have 
\[
	a^{ij}\,\in\,W_{loc}^{2,p}(B_1),
\]
for every $1\leq i\leq d$ and $1\leq j\leq d$.
\end{Assumption}

In the next section we gather elementary notions and basic facts used further in the paper.

\subsection{Preliminary notions and results}\label{subsec_premres}

We start with a result first proven in \cite{sjogren}. It concerns the existence of a continuous version to the weak solutions to \eqref{eq_main}.

\begin{Proposition}[Continuous version of weak solutions]\label{prop_contversion}
Let $u\in L^1_{loc}(B_1)$ be a weak solution to \eqref{eq_main}. Then, there exists a null set $\Omega\subset B_1$ and $v\in\mathcal{C}(B_1)$ such that
\[
	u\,\equiv\,v\;\;\;\;\;\;\;\;\;\;\mbox{in}\;\;\;\;\;\;\;\;\;\;B_1\,\setminus\,\Omega.
\] 
\end{Proposition}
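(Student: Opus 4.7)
The plan is to exhibit the continuous representative $v$ through an integral representation formula derived via duality against the Green's function of the pointwise non-divergence operator $L\phi:=a^{ij}\partial^2_{x_ix_j}\phi$. First I would fix concentric balls $B_r\subset\subset B_{r'}\subset\subset B_1$ and a cutoff $\chi\in C_c^\infty(B_{r'})$ with $\chi\equiv 1$ on $B_r$. For $x_0\in B_{r/2}$ and $\varepsilon>0$ small, classical $W^{2,p}$-solvability for non-divergence form elliptic operators (cf.\ Gilbarg-Trudinger, Chapter 9) would furnish $\phi_\varepsilon\in W^{2,p}(B_{r'})\cap C(\overline{B_{r'}})$ solving $L\phi_\varepsilon=\eta_\varepsilon(\cdot-x_0)$ in $B_{r'}$ with $\phi_\varepsilon=0$ on $\partial B_{r'}$, where $\eta_\varepsilon$ is a standard mollifier. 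Morally, $\phi_\varepsilon$ is a regularization of the Green's function $G(x_0,\cdot)$ of $L$ with pole at $x_0$.

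Next, using $\chi\phi_\varepsilon$ (mollified further if needed to land in $C_c^\infty(B_1)$) as a test function in the weak formulation of \eqref{eq_main}, I would expand
\[
L(\chi\phi_\varepsilon)=\chi L\phi_\varepsilon+2a^{ij}\partial_i\chi\,\partial_j\phi_\varepsilon+\phi_\varepsilon L\chi;
\]
since $\chi\equiv 1$ on $B_r$, the identity $\int u\cdot L(\chi\phi_\varepsilon)\,dx=0$ would collapse to
\[
u_\varepsilon(x_0)=-\int_{B_{r'}\setminus B_r}u(y)\bigl[2a^{ij}(y)\partial_i\chi(y)\partial_j\phi_\varepsilon(y)+\phi_\varepsilon(y)L\chi(y)\bigr]\,dy\;=:\;F_\varepsilon(x_0),
\]
where $u_\varepsilon:=u*\eta_\varepsilon$. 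Letting $\varepsilon\to 0$, interior regularity for the Dirichlet problem for $L$ away from the pole would give $\phi_\varepsilon\to G(x_0,\cdot)$ in $C^1(B_{r'}\setminus B_r)$, uniformly for $x_0\in B_{r/2}$. Since moreover the map $x_0\mapsto G(x_0,\cdot)|_{B_{r'}\setminus B_r}$ is continuous into $C^1(B_{r'}\setminus B_r)$, I would obtain $F_\varepsilon\to F$ uniformly on $B_{r/2}$ with $F\in C(B_{r/2})$. Combining this with $u_\varepsilon\to u$ in $L^1_{loc}(B_1)$ forces $u\equiv F$ almost everywhere on $B_{r/2}$. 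Defining $v:=F$ on $B_{r/2}$ and covering $B_1$ by countably many such balls produces the desired $v\in C(B_1)$ coinciding with $u$ off a null set $\Omega$.

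The main obstacle is to justify the Green's function calculus for $L$ under the mild regularity of the coefficients allowed in the paper. Under Assumption A\ref{assump_matrixa} combined with A\ref{assump_holder} — or with A\ref{assump_w2p}, noting that $p>d$ forces continuity of $a^{ij}$ by Sobolev embedding — the coefficients are at least continuous, so the Alexandrov-Bakelman-Pucci estimate together with the $W^{2,p}$ theory for non-divergence form equations yields solvability, uniform bounds on $\phi_\varepsilon$, and interior $C^{1,\alpha}$ control of $\phi_\varepsilon$ away from the singularity. These ingredients are exactly what is needed to pass to the limit $\varepsilon\to 0$ and to deduce continuity of $F$ in the source variable; smoothness of the Green's kernel off the diagonal then takes care of the rest.
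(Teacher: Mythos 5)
Your strategy (a duality/Green's-function representation of $u$ near $x_0$) is in the same family as the argument the paper actually relies on: the paper does not prove Proposition \ref{prop_contversion} but cites Sj\"ogren's Lemma 1, whose proof tests the weak formulation against $\varphi(x)H(x,y)$, with $H$ the \emph{explicit} frozen-coefficient kernel \eqref{eq_fundamentalsol} used as a parametrix, and controls the resulting error term $\int u\,(a^{ij}(x)-a^{ij}(y))\,\partial^2_{x_ix_j}(\varphi H)\,dx$ by the H\"older modulus of $a^{ij}$. Your sketch replaces the parametrix by the true Green's function of $L$, and this is exactly where the gaps are. First, the uniform bound on $\phi_\varepsilon$ on the annulus does not follow from ABP: ABP gives $\|\phi_\varepsilon\|_{L^\infty}\leq C\|\eta_\varepsilon\|_{L^d}\sim \varepsilon^{1-d}\to\infty$, so the claim that "ABP together with the $W^{2,p}$ theory yields uniform bounds on $\phi_\varepsilon$" is false as stated. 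What you need are pointwise Green's function estimates of the type $G\lesssim |x-y|^{2-d}$ away from the pole; for non-divergence operators these are \emph{not} a consequence of mere continuity of the coefficients (they can fail without a Dini-type condition) and, under A\ref{assump_holder}, they are usually obtained precisely through the Levi parametrix construction you are trying to sidestep. Second, the continuity of $x_0\mapsto G(x_0,\cdot)$ on the annulus, which you invoke to get $F\in C(B_{r/2})$, is continuity of $G$ in its \emph{pole} variable; in that variable $G$ solves the adjoint (double-divergence) equation, so this is essentially the statement being proven and cannot simply be assumed. It is true under H\"older coefficients, but only via an independent construction of $G$ with joint off-diagonal estimates (again the parametrix), and it does not follow from your compactness setup: ABP applied to $\phi_{\varepsilon,x_0}-\phi_{\varepsilon,x_0'}$ only controls $\|\eta_\varepsilon(\cdot-x_0)-\eta_\varepsilon(\cdot-x_0')\|_{L^d}$, which is not small uniformly in $\varepsilon$, so you do not get the uniform-in-$\varepsilon$ equicontinuity in $x_0$ that your limit requires.

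There is also a smaller issue with the test function: $\chi\phi_\varepsilon\in W^{2,p}$ is not admissible against $u\in L^1_{loc}$ in the weak formulation, and "mollifying further" does not help because mollification does not commute with the variable-coefficient operator and the error in the second derivatives would have to be controlled in $L^\infty$. This one is repairable: under A\ref{assump_holder} (or A\ref{assump_w2p}, which embeds the coefficients into a H\"older class) Schauder theory gives $\phi_\varepsilon\in\mathcal{C}^{2,\alpha}_{loc}$, so $\chi\phi_\varepsilon$ is a genuine $\mathcal{C}^2_c$ test function. In summary, the skeleton of the argument is sound and close in spirit to the cited proof, but the two analytic inputs that constitute the real content -- uniform off-diagonal kernel bounds and continuity of the kernel in the pole variable -- are asserted rather than proven, and the one justification offered (ABP) does not deliver them; supplying them amounts to carrying out the frozen-coefficient parametrix argument of Sj\"ogren that the paper points to.
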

\begin{proof}
For the proof of the proposition, we refer the reader to \cite[Lemma 1]{sjogren} ; see also \cite{sjogren75}.
\end{proof}
\begin{Remark}
Hereafter, we suppose that every locally integrable function solving \eqref{eq_main} in the weak sense is continuous.
\end{Remark}

Before proceeding we recall the fundamental solution of the operator 
\[
	a^{ij}(y)\partial^2_{x_ix_j};
\]
such function will be denoted by $H(x,y)$. In the case $d>2$, $H$ is defined as
\begin{equation}\label{eq_fundamentalsol}
	H(x,y)\,:\,=\frac{\left[a_{ij}(y)\left(x_i\,-\,y_i\right)\left(x_j\,-\,y_j\right)\right]^\frac{2-d}{2}}{\left(d\,-\,2\right)\alpha(d)\sqrt{\det{[(a^{ij})_{i,j=1}^d]}}},
\end{equation}
where $(a_{ij})_{i,j=1}^d:=[(a^{ij})_{i,j=1}^d]^{-1}$ and $\alpha(d)$ stands for the volume of the unit ball in dimension $d$.

A fundamental result in the context of this paper regards initial levels of compactness for the solutions to \eqref{eq_main}. This is the subject of the next proposition, which we recall here for the sake of completeness. 

\begin{Proposition}[Compactness of the solutions]\label{prop_compactnessol}
Let $u\in L^1_{loc}(B_1)$ be a weak solution to \eqref{eq_main}. Suppose A\ref{assump_matrixa}- A\ref{assump_holder} are in force. Then, $u\in\mathcal{C}^\alpha_{loc}(B_1)$ and there exists a constant $C>0$ such that
\begin{equation}\label{eq_estimate}
	\left\|u\right\|_{\mathcal{C}^\alpha(B_{1/2})}\,\leq\,C,
\end{equation}
with $C=C\left(d,\lambda,\Lambda,\left\|a^{ij}\right\|_{\mathcal{C}^\alpha(B_1)},\left\|u\right\|_{L^\infty(B_1)}\right)$.
\end{Proposition}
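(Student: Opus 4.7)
The plan is to prove the proposition via a coefficient-freezing/Campanato argument, leveraging the regularity theory already available for the constant-coefficient operator. Fix $y_0 \in B_{1/2}$ and write the equation in the perturbative form
\[
	a^{ij}(y_0)\partial^2_{x_ix_j}u(x) \,=\, \partial^2_{x_ix_j}\!\left[\bigl(a^{ij}(y_0)-a^{ij}(x)\bigr)u(x)\right],
\]
understood in the distributional sense. The left-hand side is a constant-coefficient elliptic operator $L_0$, and the right-hand side is a distributional source whose "size" is controlled by $\|a^{ij}\|_{\mathcal{C}^{\alpha}(B_1)}\|u\|_{L^{\infty}(B_1)} r^{\alpha}$ on balls $B_r(y_0)$ of radius $r\leq 1/4$.

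Next I would decompose $u=v+w$ in $B_r(y_0)$, where $v$ is the $L_0$-harmonic replacement of $u$ on $B_r(y_0)$ (taking boundary trace $u|_{\partial B_r(y_0)}$) and $w=u-v$ has zero boundary data and solves a Dirichlet problem driven by the perturbation. For $v$, classical constant-coefficient theory (via a linear change of variables that turns $L_0$ into the Laplacian) yields interior smoothness together with the sharp oscillation decay
\[
	\osc_{B_{\rho}(y_0)} v \,\leq\, C\!\left(\frac{\rho}{r}\right)\osc_{B_r(y_0)} u, \qquad \rho \leq r/2.
\]
For $w$, I would represent it against test functions using the fundamental solution $H(\cdot,y_0)$ from \eqref{eq_fundamentalsol} (or its Green's function variant on $B_r(y_0)$), and use assumption A\ref{assump_holder} together with Proposition \ref{prop_contversion} to bound
\[
	\|w\|_{L^{\infty}(B_{r/2}(y_0))} \,\leq\, C\|a^{ij}\|_{\mathcal{C}^{\alpha}(B_1)}\|u\|_{L^{\infty}(B_1)} r^{\alpha}.
\]
Combining these gives the iterable decay
\[
	\osc_{B_{\rho}(y_0)} u \,\leq\, C\!\left(\frac{\rho}{r}\right)\osc_{B_r(y_0)} u \,+\, C r^{\alpha},
\]
and choosing $\rho=\theta r$ for a small $\theta\in(0,1)$ and iterating across geometric scales produces the Campanato-type estimate $\osc_{B_\rho(y_0)}u\leq C\rho^{\alpha}$, uniformly in $y_0\in B_{1/2}$, which is equivalent to the claim \eqref{eq_estimate}.

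The main obstacle is the rigorous justification of the Green's representation step for $w$: the equation is in \emph{double-divergence} form, so the "right-hand side" $\partial^2_{x_ix_j}[(a^{ij}(y_0)-a^{ij}(x))u]$ is only a distribution, and the product $(a^{ij}(y_0)-a^{ij}(\cdot))u$ is merely bounded. One has to integrate by parts against test functions built from $H(\cdot,y_0)$ and exploit the $\alpha$-Hölder continuity of the coefficients to keep the perturbation uniformly of order $r^{\alpha}$; this is precisely the type of argument carried out in \cite{sjogren,bogachev1,fabesstroock}, and the present proposition can in fact be extracted from those references, which we would cite for the detailed computations while recording the final constant dependence as stated.
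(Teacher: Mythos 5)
The central quantitative step of your scheme, namely the bound $\left\|w\right\|_{L^\infty(B_{r/2}(y_0))}\leq C\left\|a^{ij}\right\|_{\mathcal{C}^\alpha(B_1)}\left\|u\right\|_{L^\infty(B_1)}r^\alpha$, is not justified and in general fails. The correction $w=u-v$ solves $a^{ij}(y_0)\partial^2_{x_ix_j}w=\partial^2_{x_ix_j}\bigl[(a^{ij}(y_0)-a^{ij}(x))u(x)\bigr]$ with zero boundary data, and the bracket is only known to be \emph{bounded} (by $Cr^\alpha$). A double-divergence right-hand side with a merely bounded density does not produce an $L^\infty$ bound linear in that density: up to boundary corrections, $w$ is a second-order Riesz-type transform of the bracket, and such operators map $L^\infty$ into $\bmo$, not into $L^\infty$. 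Concretely, in the representation $w(z)=\int_{B_r}\partial^2_{x_ix_j}G(z,x)\,\bigl(a^{ij}(y_0)-a^{ij}(x)\bigr)u(x)\,\dx x+\dots$ the kernel has the nonintegrable singularity $|x-z|^{-d}$ at the evaluation point $z$, whereas the H\"older cancellation you invoke, $|a^{ij}(y_0)-a^{ij}(x)|\leq C|x-y_0|^\alpha$, is anchored at the center $y_0$: for $z\neq y_0$ it gives no decay at $x=z$, and any modulus of continuity of the bracket at $z$ would have to come from a modulus of continuity of $u$ itself, which is exactly what is being proved. Hence the oscillation iteration, as set up, cannot be closed.

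This is precisely why the argument the paper relies on (and your closing fallback) freezes the coefficients at the pole of the kernel rather than at the center of a ball: in Sj\"ogren's representation of $u(y)$ through the fundamental solution $H(\cdot,y)$ of \eqref{eq_fundamentalsol} and a cutoff, the delicate term is $\int u(x)\,\bigl(a^{ij}(x)-a^{ij}(y)\bigr)H_{x_ix_j}(x,y)\,\dx x$, where the coefficient difference vanishes exactly at the singularity $x=y$ and yields the integrable kernel $|x-y|^{\alpha-d}$; the estimate for $|u(y)-u(z)|$ then follows directly from the oscillation of $H$ and its derivatives, with no harmonic replacement or iteration. The paper's proof of Proposition \ref{prop_compactnessol} consists of quoting \cite[Theorem 2]{sjogren} for the inclusion $u\in\mathcal{C}^\alpha_{loc}(B_1)$ and tracking the constants in that computation, so the only fully sound part of your proposal is the citation you defer to at the end. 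If you want a self-contained perturbative proof along your lines, you would have to replace the $L^\infty$ comparison for $w$ by an integral (Campanato-type excess) quantity, which can be controlled by duality together with $W^{2,p}$ estimates for the adjoint constant-coefficient problem, and then run the iteration on those excesses; that is a genuinely different argument and would need to be carried out in detail.
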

\begin{proof}
The inclusion $u\in\mathcal{C}^\alpha_{loc}(B_1)$ is a well-known result; see, for instance \cite[Theorem 2]{sjogren}. As for the estimate in \eqref{eq_estimate} it follows from considerations on the oscillation of the fundamental solution $H$, defined in \eqref{eq_fundamentalsol}, and its derivatives; see the proof of \cite[Theorem 2]{sjogren}.
\end{proof}

We proceed with a proposition on the sequential stability of the solutions to \eqref{eq_main}. It will be used further to establish two approximation lemmas.

\begin{Proposition}[Sequential stability of weak solutions]\label{prop_seqstab}
Suppose that  
\[
	\left([a^{ij}_n]_{i,j=1}^d\right)_{n\in\mathbb{N}}\subset\mathcal{C}^\alpha_{loc}(B_1;\mathcal{S}(d))
\] 
is a sequence of matrices such that 
\[
	\left\|a^{ij}_n\,-\,a^{ij}(x_0)\right\|_{L^\infty(B_1)}\,\to\,0
\]
as $n\to \infty$. Suppose further that $(f_n)_{n\in\mathbb{N}}\subset L^p(B_1)$ is so that 
\[
	\left\|f_n\right\|_{L^p(B_1)}\,\to\,0
\]
as $n\to\infty$.  Let $(u_n)_{n\in\mathbb{N}}\subset L^1_{loc}(B_1)$ satisfy
\[
	\partial^2_{x_ix_j}\left(a^{ij}_n(x)u_n(x)\right)\,=\,f_n\;\;\;\;\;\mbox{in}\;\;\;\;\;B_1.
\]
If there exists $u_\infty\in\mathcal{C}(B_1)$ such that 
\[
	\left\|u_n\,-\,u_\infty\right\|_{L^\infty(B_1)}\,\to\,0
\]
as $n\to\infty$, then $u_\infty$ satisfies
\[
	\int_{B_1}a^{ij}(x_0)u_{\infty}(x)\phi_{x_ix_j}(x)dx\,=\,0
\]
for every $\phi\in \mathcal{C}^2_c(B_1)$.
\end{Proposition}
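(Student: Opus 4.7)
The plan is to pass to the limit in the weak formulation of the equation satisfied by $u_n$, exploiting linearity in the coefficients to isolate the pieces where the various convergences kick in. By hypothesis, for every $\phi\in\mathcal{C}^2_c(B_1)$,
\[
	\int_{B_1}a^{ij}_n(x)u_n(x)\phi_{x_ix_j}(x)\dx x\,=\,\int_{B_1}f_n(x)\phi(x)\dx x.
\]
I would then add and subtract $a^{ij}(x_0)u_n\phi_{x_ix_j}$ inside the left-hand integrand, writing
\[
	\int_{B_1}a^{ij}(x_0)u_n\phi_{x_ix_j}\dx x\,=\,\int_{B_1}\left(a^{ij}(x_0)\,-\,a^{ij}_n(x)\right)u_n\phi_{x_ix_j}\dx x\,+\,\int_{B_1}f_n\phi\,\dx x,
\]
and examine each term separately.

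For the first term on the right, I would use that $\|u_n-u_\infty\|_{L^\infty(B_1)}\to 0$ implies $\sup_n\|u_n\|_{L^\infty(\sop\phi)}<\infty$; Hölder's inequality then bounds this term by
\[
	C\,\left\|a^{ij}_n\,-\,a^{ij}(x_0)\right\|_{L^\infty(B_1)}\left\|u_n\right\|_{L^\infty(B_1)}\left\|\phi_{x_ix_j}\right\|_{L^1(B_1)},
\]
which tends to zero by the assumption on the coefficients. For the second term, Hölder's inequality with the conjugate exponent $p'$ yields
\[
	\left|\int_{B_1}f_n\phi\,\dx x\right|\,\leq\,\left\|f_n\right\|_{L^p(B_1)}\left\|\phi\right\|_{L^{p'}(B_1)}\,\longrightarrow\,0.
\]
Finally, the left-hand side converges to $\int_{B_1}a^{ij}(x_0)u_\infty\phi_{x_ix_j}\dx x$ because $a^{ij}(x_0)$ is a constant matrix, $u_n\to u_\infty$ in $L^\infty(B_1)$, and $\phi_{x_ix_j}\in L^1(B_1)$. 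Collecting these three facts produces the desired identity for $u_\infty$.

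I do not expect any serious obstacle: the argument is a clean passage to the limit in a linear distributional identity, and the hypotheses are tailored to deliver exactly the three convergences required. The only point that deserves a line of justification is the uniform $L^\infty$ bound for the sequence $(u_n)$, which is automatic from the uniform convergence to a continuous limit on $B_1$. No compactness or regularity theory beyond the given hypotheses is invoked.
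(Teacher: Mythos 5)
Your argument is correct and is essentially the paper's own proof: both pass to the limit in the weak formulation by an add-and-subtract (triangle inequality) decomposition, using the uniform bound on $u_n$ over $\sop\phi$, the $L^\infty$ convergence of the coefficients and of $u_n$, and H\"older's inequality for the $f_n\phi$ term. The only cosmetic difference is that you split around $a^{ij}(x_0)u_n$ while the paper bounds $\int a^{ij}(x_0)u_\infty\phi_{x_ix_j}\dx x$ directly by three analogous terms.
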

\begin{proof}
First, notice that we have $a^{ij}_n(x_0)\to a^{ij}(x_0)$ as $n\to\infty$. Now, for every $\phi\in\mathcal{C}^2_c(B_1)$ we have

\begin{align*}
\left|\int_{B_1}\phi_{x_ix_j}a^{ij}(x_0)u_\infty(x)\dx x\right| &\leq\int_{B_1}|\phi_{x_ix_j}|\left|a^{ij}(x_0)-a^{ij}_n(x)\right||u_\infty(x)|\dx x\\
&\quad +\int_{B_1}|\phi_{x_ix_j}||a^{ij}_n(x)|\left|u_n(x)-u_\infty(x)\right|\dx x \\
&\quad + \int_{B_1}|\phi||f_n|\dx x.
\end{align*}
Notice that the right-hand side of this inequality converges to zero as $n\to\infty$. Therefore, 
\[
	\int_{B_1}\phi_{x_ix_j}a^{ij}(x_0)u_\infty(x)\dx x\,=\,0.
\]
This concludes the proof.
\end{proof}

In addition to the sequential stability, our arguments require an initial degree of compactness for the solutions to \eqref{eq_main}. When it comes to the proof of Theorem \ref{thm_main}, uniform compactness comes from Proposition \ref{prop_compactnessol}. In the case of Theorem \ref{thm_main2}, we turn to a well-known result on the regularity of the (weak) solutions to equations in the divergence form. We start with an observation.

In case A\ref{assump_w2p} is in force, we claim that \eqref{eq_main} can be written as
\begin{equation}\label{eq_divlady}
	\partial_{x_i}\left(a^{ij}(x)\partial_{x_j}u(x)\,+\,\partial_{x_j}a^{ij}(x)u(x)\right)\,=\,0\;\;\;\;\;\mbox{in}\;\;\;\;\;B_1.
\end{equation}
Indeed, if $a^{ij}$ is weakly differentiable, we have
\begin{align*}
	\int_{B_1}a^{ij}u\partial_{x_ix_j}\phi \dx x\,=\,-\int_{B_1}\left(a^{ij}\partial_{x_j}u\,+\,\partial_{x_j}a^{ij}u\right)\partial_{x_i}\phi\dx x,
\end{align*}
for every $\phi\in\mathcal{C}^2_c(B_1)$. Hence, under A\ref{assump_w2p}, the homogeneous version of \eqref{eq_main} is equivalent to \eqref{eq_divlady}. Now we are in position to state the following:

\begin{Proposition}\label{prop_lady}
Let $v\in W^{1,p}(B_1)$ be a weak solution to \eqref{eq_divlady}. Suppose A\ref{assump_matrixa} and A\ref{assump_w2p} are in force. Then, $v\in \mathcal{C}^{1,\alpha}_{loc}(B_1)$, where 
\[
	\alpha\,:=\,\frac{p\,-\,d}{p}.
\]
Moreover, there exists a universal constant $C>0$ such that 
\[
	\left\|v\right\|_{\mathcal{C}^{1,\alpha}(B_{1/2})}\,\leq\,C\left\|v\right\|_{L^\infty(B_1)}.
\]
\end{Proposition}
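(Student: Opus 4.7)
My plan is to reduce Proposition \ref{prop_lady} to classical regularity theory for linear divergence-form elliptic equations with Hölder continuous coefficients and a Hölder continuous lower-order term. First, I would exploit assumption A\ref{assump_w2p} together with Morrey's embedding: since $p>d$, the inclusion $W^{2,p}_{\mathrm{loc}}(B_1)\hookrightarrow\mathcal{C}^{1,\alpha}_{\mathrm{loc}}(B_1)$ with $\alpha=(p-d)/p=1-d/p$ yields
\[
 a^{ij}\in\mathcal{C}^{1,\alpha}_{\mathrm{loc}}(B_1),\qquad b^i:=\partial_{x_j}a^{ij}\in W^{1,p}_{\mathrm{loc}}(B_1)\hookrightarrow\mathcal{C}^{0,\alpha}_{\mathrm{loc}}(B_1).
\]
With these regularity gains, equation \eqref{eq_divlady} rewrites as
\[
 \partial_{x_i}\bigl(a^{ij}(x)\partial_{x_j}v(x)\bigr)+\partial_{x_i}\bigl(b^i(x)v(x)\bigr)=0\quad\text{in }B_1,
\]
which is a linear, uniformly $(\lambda,\Lambda)$-elliptic divergence-form equation with Hölder continuous principal coefficients and a divergence-form lower-order perturbation with Hölder continuous coefficient.

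Next, I would invoke the De Giorgi--Nash--Moser theorem to upgrade $v\in W^{1,p}(B_1)$ to $v\in\mathcal{C}^{0,\gamma}_{\mathrm{loc}}(B_1)\cap L^\infty_{\mathrm{loc}}(B_1)$ for some $\gamma>0$, together with an intrinsic $L^\infty$ bound $\|v\|_{L^\infty(B_{3/4})}\le C\|v\|_{L^\infty(B_1)}$ (using boundedness of $b^i$). With this initial regularity and the Caccioppoli inequality, $Dv$ is controlled in $L^2$ by $\|v\|_{L^\infty(B_1)}$.

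For the gradient Hölder estimate, I would implement a Campanato-type perturbative scheme: fix $x_0\in B_{1/2}$ and freeze coefficients at $x_0$, rewriting the equation as
\[
 \partial_{x_i}\bigl(a^{ij}(x_0)\partial_{x_j}v\bigr)=\partial_{x_i}F^i,\qquad F^i:=\bigl(a^{ij}(x_0)-a^{ij}(x)\bigr)\partial_{x_j}v-b^i(x)v.
\]
The constant-coefficient solution enjoys the standard decay $\int_{B_r}|Dw-(Dw)_r|^2\le C(r/R)^{d+2}\int_{B_R}|Dw-(Dw)_R|^2$, and the perturbation $F^i$ is $\mathcal{C}^{0,\alpha}$ with norm controlled by $\|v\|_{L^\infty(B_1)}$ via Step 1 and the Caccioppoli estimate. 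Comparing $v$ to the frozen-coefficient solution in a dyadic sequence of balls, summing the resulting geometric series, and using Campanato's characterization of $\mathcal{C}^{1,\alpha}$, I obtain $v\in\mathcal{C}^{1,\alpha}_{\mathrm{loc}}(B_1)$ with the quantitative estimate claimed. Alternatively, one could simply quote \cite[Ch. 3, Thm.\ 15.1]{ladyural}, already referenced in the paper, which delivers the same conclusion in one stroke.

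The main obstacle I expect is bookkeeping the constants to obtain the clean estimate $\|v\|_{\mathcal{C}^{1,\alpha}(B_{1/2})}\le C\|v\|_{L^\infty(B_1)}$ (rather than the more natural $W^{1,2}$- or $W^{1,p}$-based bound). This requires first using interior Moser/De Giorgi estimates to pass from $L^2$ to $L^\infty$, and then carefully tracking the dependence through the Campanato iteration, relying crucially on $\|b^i\|_{\mathcal{C}^{0,\alpha}}+\|a^{ij}\|_{\mathcal{C}^{0,\alpha}}$ being universal.
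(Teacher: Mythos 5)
Your proposal is correct in substance, but it is worth noting that the paper itself does not argue at all: its ``proof'' of Proposition \ref{prop_lady} consists of rewriting \eqref{eq_main} in the divergence form \eqref{eq_divlady} (which is done just before the statement) and then quoting \cite[Chapter 3, Theorem 15.1]{ladyural} verbatim --- precisely the alternative you mention in your last sentence. Your main route is therefore genuinely different: you reconstruct the underlying regularity theorem, first upgrading the coefficients via Morrey's embedding ($a^{ij}\in\mathcal{C}^{1,\alpha}_{\mathrm{loc}}$, $b^i=\partial_{x_j}a^{ij}\in\mathcal{C}^{0,\alpha}_{\mathrm{loc}}$, $\alpha=1-d/p$), then using De Giorgi--Nash--Moser plus Caccioppoli for the initial bounds, and finally a freezing/Campanato iteration for the $\mathcal{C}^{1,\alpha}$ estimate; this buys a self-contained, quantitative proof in which the dependence of the constant on $d$, $\lambda$, $\Lambda$ and $\|a^{ij}\|_{W^{2,p}}$ is visible, whereas the paper's citation buys brevity at the cost of opacity about constants. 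One step in your sketch is stated too loosely: the perturbation $F^i=(a^{ij}(x_0)-a^{ij}(x))\partial_{x_j}v-b^iv$ is \emph{not} a priori of class $\mathcal{C}^{0,\alpha}$, since it contains $Dv$, which is exactly the object whose regularity is being proved. The standard repair is the one implicit in Campanato-space Schauder theory: one only uses the smallness $|a^{ij}(x)-a^{ij}(x_0)|\leq Cr^{\alpha}$ on $B_r(x_0)$ to bound $\int_{B_r}|F^i-(F^i)_{r}|^2$ by $Cr^{2\alpha}\int_{B_r}|Dv|^2$ plus the genuinely H\"older contribution of $b^iv$, and then runs an iteration (first placing $Dv$ in Morrey spaces $L^{2,\mu}$ for every $\mu<d$, then in the Campanato space $\mathcal{L}^{2,d+2\alpha}$), rather than asserting H\"older continuity of $F^i$ from the outset. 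With that adjustment the argument closes and yields the stated estimate $\|v\|_{\mathcal{C}^{1,\alpha}(B_{1/2})}\leq C\|v\|_{L^{\infty}(B_1)}$.
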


For the proof of Proposition \ref{prop_lady}, we refer the reader to \cite[Chapter 3, Theorem 15.1]{ladyural}. The former proposition is paramount in establishing Theorem \ref{thm_main2}. Apart from compactness, it produces gradient-continuity for the solutions to \eqref{eq_divlady}. This information plays a critical role in the treatment of fine regularity properties of the solutions to the homogeneous version of \eqref{eq_main} along $S_1[u]$. In particular, it unlocks a first zero level-set approximation result.

We conclude this section with a comment on the scaling properties of \eqref{eq_main}. Indeed, we consider weak solutions satisfying $\left\|u\right\|_{L^\infty(B_1)}\leq 1$. Let $\overline{u}\in\mathcal{C}(B_1)$ be defined as follows:
\[
	\overline{u}(x)\,:=\,\frac{u(x)}{\max\left\lbrace 1,\,\left\|u\right\|_{L^\infty(B_1)}\right\rbrace},
\]	
where $u$ is a weak solution to \eqref{eq_main}. It is clear that $\overline{u}$ is a weak solution to
\[
	\partial^2_{x_ix_j}\left(a^{ij}(x)\overline{u}(x)\right)\,=\, 0\;\;\;\;\;\mbox{in}\;\;\;\;\;B_1.
\]
Notice that $\left\|\overline{u}\right\|_{L^\infty(B_1)}\leq 1$. Then, hereinafter we consider, without loss of generality, normalized solutions to \eqref{eq_main}. In the sequel, we set forth the proof of Theorem \ref{thm_main}.

\section{Improved regularity of the solutions}\label{sec_proofmt}

In this section we detail the proof of Theorem \ref{thm_main}. As mentioned before, we reason through an approximation/geometric method. At the core of our argument lies a zero level-set Approximation Lemma. It reads as follows:

\begin{Proposition}[Zero level-set Approximation Lemma]\label{prop_approx}
Let $u\in L^1_{loc}(B_1)$ be a weak solution to \eqref{eq_main}, $x_0 \in S_0[u]\cap B_{9/10}$ and suppose A\ref{assump_matrixa}-A\ref{assump_holder} are in force. Given $\delta>0$, there exists $\varepsilon=\varepsilon(\delta)>0$ such that, if
\[
	\sup_{x\in B_1}\,\left|a^{ij}(x)\,-\,a^{ij}(0)\right|\,<\, \varepsilon,
\]
there exists $h\in \mathcal{C}^{1,1}(B_{9/10})$ satisfying
\[
	\left\|u\,-\,h\right\|_{L^\infty(B_{9/10})}\,<\,\delta
\]
with
\[
	h(x_0)\,=\,0.
\]
\end{Proposition}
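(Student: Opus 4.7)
I would argue by contradiction and compactness, in the spirit of Caffarelli's approximation methods. Suppose the conclusion fails: then there exist $\delta_0 > 0$, a sequence of coefficient matrices $(a^{ij}_n)_n$ satisfying A\ref{assump_matrixa}-A\ref{assump_holder} (with uniform constants), normalized weak solutions $(u_n)_n$ of $\partial^2_{x_ix_j}(a^{ij}_n u_n)=0$ in $B_1$, and points $x_0^n \in S_0[u_n] \cap \overline{B_{9/10}}$, such that
\[
\sup_{x\in B_1} \bigl|a^{ij}_n(x) - a^{ij}_n(0)\bigr| < \frac{1}{n},
\]
yet $\|u_n - h\|_{L^\infty(B_{9/10})} \geq \delta_0$ for every $h \in \mathcal{C}^{1,1}(B_{9/10})$ with $h(x_0^n) = 0$.

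\textbf{Passing to limits.} Up to subsequence, $x_0^n \to x_\infty$ by compactness of $\overline{B_{9/10}}$. By A\ref{assump_matrixa}, the symmetric matrices $a^{ij}_n(0)$ lie in a compact set, so up to subsequence $a^{ij}_n(0) \to a^{ij}_\infty$ for some $(\lambda,\Lambda)$-elliptic symmetric matrix; combined with the proximity regime, $a^{ij}_n \to a^{ij}_\infty$ uniformly in $B_1$. Proposition \ref{prop_compactnessol}, applied over a finite cover of $\overline{B_{9/10}}$ by balls compactly contained in $B_1$, yields a uniform bound for $\|u_n\|_{\mathcal{C}^\alpha(\overline{B_{9/10}})}$. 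By Arzel\`a-Ascoli, along a further subsequence $u_n \to u_\infty$ uniformly in $\overline{B_{9/10}}$, and continuity of the convergence gives $u_\infty(x_\infty)=0$.

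\textbf{Identifying the limit.} An application of Proposition \ref{prop_seqstab} (with $f_n\equiv 0$, the role of $a^{ij}(x_0)$ played by $a^{ij}_\infty$) shows
\[
\int_{B_1} a^{ij}_\infty\, u_\infty(x)\, \phi_{x_ix_j}(x)\,\dx x \,=\, 0 \quad \text{for every }\phi \in \mathcal{C}^2_c(B_1).
\]
Because $a^{ij}_\infty$ is constant, the symmetric linear change of variables $y = A^{-1/2}x$ (with $A := (a^{ij}_\infty)$) turns this distributional identity into $\Delta \tilde{u}_\infty = 0$ weakly. By Weyl's lemma, $\tilde{u}_\infty$ is harmonic in the classical sense, hence real analytic, so that $u_\infty \in \mathcal{C}^\infty(B_{9/10}) \subset \mathcal{C}^{1,1}(B_{9/10})$.

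\textbf{Producing the contradiction.} Define
\[
h_n(x) \,:=\, u_\infty(x) \,-\, u_\infty(x_0^n).
\]
Then $h_n \in \mathcal{C}^{1,1}(B_{9/10})$ and $h_n(x_0^n) = 0$ by construction. Furthermore,
\[
\|u_n - h_n\|_{L^\infty(B_{9/10})} \,\leq\, \|u_n - u_\infty\|_{L^\infty(B_{9/10})} \,+\, |u_\infty(x_0^n)| \,\longrightarrow\, 0,
\]
since $u_n \to u_\infty$ uniformly and $u_\infty(x_0^n) \to u_\infty(x_\infty) = 0$. This contradicts $\|u_n - h\|_{L^\infty(B_{9/10})} \ge \delta_0$ for admissible $h$, closing the argument. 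The main technical hurdle is the identification step: because the equation is in double-divergence form, one cannot pass to the limit by integration by parts against the solution, and the correct vehicle is precisely the stability provided by Proposition \ref{prop_seqstab} together with uniform $\mathcal{C}^\alpha$ compactness of the $u_n$; any looseness in the Hölder bound of the coefficients would propagate here and must be absorbed into the universal constants of the class.
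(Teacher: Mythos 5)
Your proof is correct and follows essentially the same route as the paper: a contradiction argument combining the uniform $\mathcal{C}^\alpha$ compactness of Proposition \ref{prop_compactnessol}, the sequential stability of Proposition \ref{prop_seqstab}, and constant-coefficient regularity (which the paper invokes directly, while you spell it out via a change of variables and Weyl's lemma). Your only deviation is a welcome refinement: you let the points $x_0^n$ vary and take $h_n := u_\infty - u_\infty(x_0^n)$, which handles the uniformity of $\varepsilon$ over the touching point more carefully than the paper's choice of a fixed $x_0$ and $h \equiv u_\infty$.
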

\begin{proof}
The proof follows from a contradiction argument. We start by supposing that the statement of the proposition is false. Therefore, there exist $\delta_0>0$ and sequences $\left([a^{ij}_n]_{i,j=1}^d\right)_{n\in\mathbb{N}}$ and  $(u_n)_{n\in\mathbb{N}}\subset L^\infty(B_1)$ such that
\[
	\sup_{x\in B_1}\left|a^{ij}_n(x)\,-\,a^{ij}_n(0)\right|\,\sim\,\frac{1}{n},
\]
\[
x_0 \in S_0[u_n]\cap B_{9/10}
\]
and 
\[
	\partial^2_{x_i x_j}\left(a^{ij}_n(x)u_n(x)\right)\,=\,0\;\;\;\;\;\mbox{in}\;\;\;\;\;B_1,
\]
but
\[
	\left|u_n(x)\,-\,h(x)\right|\,>\,\delta_0 \;\;\;\;\;\;\;\;\;\;\mbox{or}\;\;\;\;\;\;\;\;\;\;h(x_0)\,\neq\,0
\]
for every $h\in\mathcal{C}^{1,1}(B_{9/10})$ and every $n\in\mathbb{N}$. 

Notice that $(u_n)_{n\in\mathbb{N}}$ is uniformly bounded in $\mathcal{C}^\alpha(B_1)$. Therefore, there exists $u_\infty$ such that 
\[
	\left\|u_n\,-\,u_\infty\right\|_{\mathcal{C}^\beta(B_1)}\,\to \,0,
\]
for every $0<\beta<\alpha$, through a subsequence, if necessary. On the other hand, we have that $a^{ij}_n(0) \to \overline{a}^{ij}(0)$ as $n \to \infty$; hence
\[
|a^{ij}_n(x) - \overline{a}^{ij}(0)| \leq |a^{ij}_n(x) - a^{ij}_n(0)| + |a^{ij}_n(0) - \overline{a}^{ij}(0)|.
\]
Therefore
\[
	\|a^{ij}_n - \overline{a}^{ij}(0)\|_{L^{\infty}(B_1)}\,\to\,0,
\]
as $n \to \infty$.
Hence, the sequential stability of weak solutions (Proposition \ref{prop_seqstab}) leads to
\[
	\partial^2_{x_ix_j}\left(\overline{a}^{ij}(0)u_\infty(x)\right)\,=\,0\;\;\;\;\;\mbox{in}\;\;\;\;\;B_{9/10}.
\]
The regularity theory for constant-coefficients equations implies that $u_\infty\in\mathcal{C}^{1,1}(B_{9/10})$ and, moreover, $u_\infty(x_0)=0$. Finally, there exists $N\in\mathbb{N}$ such that
\[
	\left|u_n(x)\,-\,u_\infty(x)\right|\,<\,\delta_0,
\] 
provided $n>N$. By taking $h\equiv u_\infty$, we produce a contradiction and conclude the proof.
\end{proof}

\begin{Remark}\label{remarkp5}
The proof of Proposition \ref{prop_approx} shows that the approximating function $h$ solves the problem
\begin{equation}\label{eq_hnorm}
	\begin{cases}
		\partial^2_{x_ix_j}\left(\overline{a}^{ij}(0)h(x)\right)\,=\,0&\;\;\;\;\;\mbox{in}\;\;\;\;\;B_{9/10}\\
		h\,=\,h_0&\;\;\;\;\;\mbox{on}\;\;\;\;\;\partial B_{9/10},
	\end{cases}
\end{equation}
where
\[
	\left\|h_0\right\|_{L^\infty(\partial B_{9/10})}\,\leq\,\delta\,+\,\left\|u\right\|_{L^\infty(B_1)}.
\]
Therefore, it follows from standard results in elliptic regularity theory that
\[
	\left\|h\right\|_{\mathcal{C}^{1,1}(B_{9/10})}\,\leq\,C\left(1\,+\,\left\|u\right\|_{L^\infty(B_{1})}\right),
\]
where $C>0$ depends on the dimension $d$, the ellipticity constants $\lambda$ and $\Lambda$ and $\overline{a}^{ij}(0)$. We notice the constant $C$ does not depend on $u$.
\end{Remark}
\begin{Remark}\label{remarkepsilon}
A priori, the parameter $\varepsilon>0$ depends only on $\delta>0$. We notice however that (a universal) choice of $\delta$, made further in the paper, implies that $\varepsilon$ will depend on the exponent $\alpha$, the dimension $d$, $\lambda$, $\Lambda$ and $\left\|u\right\|_{L^\infty(B_1)}$. Therefore, we have
\[
	\varepsilon\,=\,\varepsilon\left(\alpha,\,d,\,\lambda,\,\Lambda,\,\left\|u\right\|_{L^\infty(B_1)}\right).
\]
\end{Remark}

Next, we control the oscillation of the solutions to \eqref{eq_main} within a ball of radius $0<\rho\ll1/2$, to be determined further.

\begin{Proposition}\label{prop_step1}
Let $u\in L^1(B_1)$ be a weak solution to \eqref{eq_main}. Suppose A\ref{assump_matrixa}- A\ref{assump_holder} are in force. Then, for every $\alpha\in(0,1)$, there exists 
$\varepsilon>0$ such that, if $x_0 \in S_0[u]\cap B_{9/10}$ and
\[
    \sup_{x\in B_1} \left|a^{ij}(x)\,-\,a^{ij}(0)\right|\,<\,\varepsilon,
\]
we can find $0<\rho\ll 1/2$ for which
\[
     \sup_{B_\rho(x_0)}\left|u(x)\right|\,\leq\,\rho^\alpha.
\]
\end{Proposition}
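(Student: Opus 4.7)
The plan is to combine the Zero Level-set Approximation Lemma (Proposition \ref{prop_approx}) with the $\mathcal{C}^{1,1}$ regularity of the approximating function, and then make a careful choice of parameters so that the target estimate $\rho^\alpha$ absorbs both the approximation error and the (essentially Lipschitz) contribution of the limiting profile.

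First, I would fix $\alpha\in(0,1)$ and introduce two parameters $\rho>0$ and $\delta>0$, both to be chosen, with $\delta$ depending on $\rho$. Applying Proposition \ref{prop_approx} with this $\delta$ yields a smallness threshold $\varepsilon=\varepsilon(\delta)>0$ and an approximating function $h\in\mathcal{C}^{1,1}(B_{9/10})$ with $h(x_0)=0$ and
\[
    \|u-h\|_{L^\infty(B_{9/10})}\,<\,\delta.
\]
By Remark \ref{remarkp5} (and the normalization $\|u\|_{L^\infty(B_1)}\leq 1$), we have a universal bound $\|h\|_{\mathcal{C}^{1,1}(B_{9/10})}\leq C_0$, where $C_0$ depends only on $d$, $\lambda$, $\Lambda$. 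In particular, $\|Dh\|_{L^\infty(B_{9/10})}\leq C_0$, so since $h(x_0)=0$ and $x_0\in B_{9/10}$, the mean value theorem gives, for every $x\in B_\rho(x_0)\subset B_{9/10}$ (once $\rho$ is small enough),
\[
    |h(x)|\,=\,|h(x)-h(x_0)|\,\leq\,C_0\,\rho.
\]

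Next, by the triangle inequality,
\[
    \sup_{x\in B_\rho(x_0)}|u(x)|\,\leq\,\sup_{x\in B_\rho(x_0)}|h(x)|\,+\,\|u-h\|_{L^\infty(B_{9/10})}\,\leq\,C_0\,\rho\,+\,\delta.
\]
Now I would choose $\rho\in(0,1/2)$ so that $C_0\rho\leq \tfrac{1}{2}\rho^{\alpha}$; since $\alpha<1$, this amounts to $\rho\leq(2C_0)^{-1/(1-\alpha)}$, which is a universal choice depending only on $\alpha$, $d$, $\lambda$, $\Lambda$. Then I would set $\delta:=\tfrac{1}{2}\rho^{\alpha}$, and take $\varepsilon=\varepsilon(\delta)$ from Proposition \ref{prop_approx}. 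With these choices,
\[
    \sup_{x\in B_\rho(x_0)}|u(x)|\,\leq\,\tfrac{1}{2}\rho^{\alpha}\,+\,\tfrac{1}{2}\rho^{\alpha}\,=\,\rho^{\alpha},
\]
which is the desired estimate.

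There is no real obstacle here, since the architecture is standard once the Approximation Lemma is available: the strict inequality $\alpha<1$ is exactly what makes the linear decay $C_0\rho$ of the $\mathcal{C}^{1,1}$-limiting profile beatable by $\rho^{\alpha}$ at a small scale. The only subtlety to double-check is the order of dependencies, namely that $\varepsilon$ is chosen after $\rho$ and $\delta$ are fixed (as noted in Remark \ref{remarkepsilon}), so that $\varepsilon$ ends up depending on $\alpha$, $d$, $\lambda$, $\Lambda$ and the $L^\infty$-norm of $u$, but not on $\rho$ in a circular way.
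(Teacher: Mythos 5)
Your argument is correct and coincides with the paper's own proof: the same application of Proposition \ref{prop_approx}, the same triangle-inequality splitting via the $\mathcal{C}^{1,1}$ bound on $h$ from Remark \ref{remarkp5}, and the same universal choices $\rho=(2C)^{-1/(1-\alpha)}$, $\delta=\rho^\alpha/2$. Your closing remark on the order of the parameter choices (so that $\varepsilon$ is fixed after $\rho$ and $\delta$) is exactly the point recorded in Remark \ref{remarkepsilon}.
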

\begin{proof}
We start by taking a function $h\in\mathcal{C}^{1,1}_{loc}(B_{9/10})$ satisfying
\[
     \left\|u\,-\,h\right\|_{L^\infty(B_{9/10})}\,<\,\delta,
\]
with
\[
     h(x_0)\,=\,0.
\]

The existence of such a function is guaranteed by Proposition \ref{prop_approx}. We have
\[
     \sup_{x\in B_{\rho}(x_0)}\left|h(x)\,-\,h(x_0)\right|\,\leq\, C\rho,
\]
for some constant $C>0$;  see Remark \ref{remarkp5}. Therefore,
\begin{align}
\nonumber \sup_{x\in B_{\rho}(x_0)}\left|u(x)\,-\,h(x_0)\right|\,&\leq\,\sup_{x\in B_{\rho}(x_0)}\left|u(x)\,-\,h(x)\right|\,+\,\sup_{x\in B_{\rho}(x_0)}\left|h(x)\,-\,h(x_0)\right|
\\\label{eq_universal1}&\leq\,\delta\,+\,C\rho.
\end{align}

In the sequel, we make universal choices for $\rho$ and $\delta$; in fact, for a given $\alpha\in(0,1)$, we set
\begin{equation}\label{eq_universal2}
     \rho\,:=\,\left(\frac{1}{2C}\right)^\frac{1}{1-\alpha}\;\;\;\;\;\mbox{and}\;\;\;\;\;\delta\,:=\,\frac{\rho^\alpha}{2}.
\end{equation}
Finally, we combine \eqref{eq_universal1} with \eqref{eq_universal2} to obtain
\[
     \sup_{B_\rho(x_0)}\left|u(x)\right|\,\leq\,\rho^\alpha
\]
and conclude the proof.
\end{proof}

\begin{Proposition}\label{prop_finalstep}
Let $u\in L^1_{loc}(B_1)$ be a weak solution to \eqref{eq_main}. Suppose assumptions A\ref{assump_matrixa}-A\ref{assump_holder} are in force. Then, there exists 
$\varepsilon>0$ so that, if $x_0 \in S_0[u]\cap B_{9/10}$ and
\[
     \sup_{x\in B_1}\left|a^{ij}(x)\,-\,a^{ij}(0)\right|\,<\,\varepsilon,
\]
we can find $0<\rho\ll 1/2$ for which
\[
     \sup_{B_{\rho^n}(x_0)}\left|u(x)\right|\,\leq\,\rho^{n\alpha},
\]
for every $n\in\mathbb{N}$.
\end{Proposition}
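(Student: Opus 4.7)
The plan is to prove the estimate by induction on $n$. The base case $n=1$ is exactly the content of Proposition \ref{prop_step1}, so it suffices to establish the inductive step.

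Assuming $\sup_{B_{\rho^n}(x_0)}|u(x)| \leq \rho^{n\alpha}$, I would introduce the natural rescaling
\[
u_n(x) := \frac{u(x_0 + \rho^n x)}{\rho^{n\alpha}}, \qquad \tilde{a}^{ij}_n(x) := a^{ij}(x_0 + \rho^n x),
\]
and check that $u_n$ satisfies all the hypotheses of Proposition \ref{prop_step1} with new reference point $0$. A change of variables in the weak formulation, in which the factor $\rho^{2n}$ produced by two derivatives and the Jacobian $\rho^{nd}$ cancel against the $\rho^{n\alpha}$ normalization, shows that $u_n$ weakly solves $\partial^2_{x_ix_j}(\tilde{a}^{ij}_n u_n) = 0$ in $B_1$. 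The inductive hypothesis yields $\|u_n\|_{L^\infty(B_1)} \leq 1$, while $u_n(0) = u(x_0)/\rho^{n\alpha} = 0$ places $0 \in S_0[u_n] \cap B_{9/10}$. Uniform $(\lambda,\Lambda)$-ellipticity and $\alpha$-H\"older continuity of $\tilde{a}^{ij}_n$ are inherited from those of $a^{ij}$.

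The decisive point is preservation of the smallness condition on the coefficients. Since $\tilde{a}^{ij}_n(0) = a^{ij}(x_0)$, the $\alpha$-H\"older continuity of $a^{ij}$ yields
\[
\sup_{x \in B_1} \bigl|\tilde{a}^{ij}_n(x) - \tilde{a}^{ij}_n(0)\bigr| = \sup_{y \in B_{\rho^n}(x_0)} \bigl|a^{ij}(y) - a^{ij}(x_0)\bigr| \leq \|a^{ij}\|_{\mathcal{C}^\alpha(B_1)}\, \rho^{n\alpha},
\]
which sits strictly below $\varepsilon$ for every $n\geq 1$ as soon as $\rho$ is chosen to be the minimum of the value delivered by Proposition \ref{prop_step1} and $(\varepsilon/\|a^{ij}\|_{\mathcal{C}^\alpha(B_1)})^{1/\alpha}$. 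With this smaller $\rho$, Proposition \ref{prop_step1} applies to $u_n$ and produces $\sup_{B_\rho}|u_n| \leq \rho^\alpha$, which after unscaling reads $\sup_{B_{\rho^{n+1}}(x_0)}|u| \leq \rho^{(n+1)\alpha}$, closing the induction.

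The main obstacle is exactly this last verification: the smallness hypothesis in Proposition \ref{prop_step1} is phrased with respect to the value of the coefficients at the origin, whereas after rescaling the relevant reference point becomes $x_0$. Compensating for the shifting reference point across scales is where the H\"older continuity assumption A\ref{assump_holder} is used in an essential way; the same argument would fail for merely continuous coefficients, since no quantitative modulus would be available to offset the mismatch at each rescaling step.
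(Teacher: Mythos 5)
Your induction scheme and rescaling are exactly the paper's; the difference lies in how you propagate the smallness condition to the rescaled coefficients, and that is where your argument does not close. You propose to take $\rho$ as the minimum of the radius delivered by Proposition \ref{prop_step1} and $(\varepsilon/\|a^{ij}\|_{\mathcal{C}^\alpha(B_1)})^{1/\alpha}$. But in Proposition \ref{prop_step1} the radius, the approximation tolerance and the threshold are linked: the conclusion $\sup_{B_\rho(x_0)}|u|\le\rho^\alpha$ comes from $\delta+C\rho\le\rho^\alpha$ with the choices $\delta=\rho^\alpha/2$ in \eqref{eq_universal2}, and $\varepsilon=\varepsilon(\delta)$ is produced by the compactness argument of Proposition \ref{prop_approx}. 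If you shrink $\rho$, you must shrink $\delta$ accordingly (otherwise $\delta+C\rho\le\rho^\alpha$ fails at the smaller radius), hence also $\varepsilon$; your requirement $\|a^{ij}\|_{\mathcal{C}^\alpha(B_1)}\rho^{\alpha}\le\varepsilon$ then makes $\rho$ depend on $\varepsilon$ and $\varepsilon$ on $\rho$. Since the approximation lemma gives no rate for $\varepsilon(\delta)$ (it is a contradiction/compactness argument), there is no guarantee that this circular system has a solution: if $\varepsilon(\delta)$ decays faster than any power of $\delta$, no admissible $\rho$ exists. So the decisive step, as written, is a genuine gap.

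The difficulty you flag is in fact harmless, and the paper resolves it without any use of H\"older continuity. Since $x_0\in B_{9/10}$ and $\rho^n\ll1/2$, the points $x_0+\rho^n x$ stay in $B_1$ for $x\in B_1$, so the rescaled coefficients are restrictions of the original ones and the hypothesis of the proposition already gives $\sup_{x\in B_1}|a^{ij}(x_0+\rho^n x)-a^{ij}(0)|<\varepsilon$: at every scale the coefficients remain $\varepsilon$-close to the \emph{same} fixed constant elliptic matrix $a^{ij}(0)$, which is all the approximation machinery actually needs (the limiting equation only requires some constant elliptic matrix). If one insists on recentering the reference value at $\tilde{a}^{ij}_n(0)=a^{ij}(x_0)$, the triangle inequality yields closeness $2\varepsilon$, so it suffices to run the whole argument with $\varepsilon/2$; again no modulus of continuity is invoked and $\rho$ is left untouched. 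This also corrects your closing diagnosis: A\ref{assump_holder} is not what compensates the shifting reference point; it enters through the uniform $\mathcal{C}^\alpha$ estimate of Proposition \ref{prop_compactnessol}, which supplies the compactness behind the approximation lemma (and guarantees the rescaled problem still satisfies A\ref{assump_holder}). The remainder of your step --- that $u_n$ solves the rescaled equation, $\|u_n\|_{L^\infty(B_1)}\le1$ from the induction hypothesis, $0\in S_0[u_n]$, and inherited ellipticity --- is correct and coincides with the paper's proof.
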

\begin{proof}
We resort to an induction argument. First, we make the same choices as in \eqref{eq_universal2}; this (universally) determines the parameter $\varepsilon$. The first step of induction -- the case $n=1$ -- follows from Proposition \ref{prop_step1}. The induction hypothesis refers to the case $n=k$; i.e.,

\[
     \sup_{B_{\rho^k}(x_0)}\left|u(x)\right|\,\leq\,\rho^{k\alpha},
\]
for some $k\in\mathbb{N}$. 

In the sequel we address the case $n=k+1$. To that end, we introduce an auxiliary function $v_k:B_1\to \mathbb{R}$, defined as
\[
     v_k(x)\,:=\,\frac{u(x_0 + \rho^k x)}{\rho^{k\alpha}}.
\]

We observe that $v_k(0) = 0$. In addition $v_k$ solves 
\begin{equation}\label{eq_approxk}
    \partial^2_{x_ix_j}\left(a^{ij}_k(x)v_k(x)\right)\,=\,  0   \;\;\;\;\;\mbox{in}\;\;\;\;\;B_1,
\end{equation}
where
\[
	a^{ij}_k(x)\,:=\,a^{ij}(x_0 + \rho^k x).
\]
Now, notice that
\[
	\left|a^{ij}_k(x)\,-\,a^{ij}(0)\right|\,=\,\left|a^{ij}(x_0 + \rho^kx) \,- \,a^{ij}(0)\right|\, \leq\, \varepsilon.
\]
Finally, the matrix $(a^{ij}_k)_{i,j=1}^d$ inherits the H\"older continuity and the $(\lambda,\Lambda)$-ellipticity of $(a^{ij})_{i,j=1}^d$. Therefore, \eqref{eq_approxk} falls within the scope of Proposition \ref{prop_step1}. 
Hence,
\[
     \sup_{B_{\rho^k}}\left|v_k(x)\right|\,\leq\,\rho^{\alpha};
\]
by rescaling back to the unitary setting, we get
\[
     \sup_{B_{\rho^{k+1}}(x_0)}\left|u(x)\right|\,\leq\,\rho^{(k+1)\alpha}
\]
and complete the proof.
\end{proof}


\begin{proof}[Proof of Theorem \ref{thm_main}]
Let $0<r\ll1/2$ be fixed and take $x_0\in S_0[u]$. We must verify that 
\[
	\sup_{B_r(x_0)}\left|u(x)\,-\,u(x_0)\right|\,\leq\,Cr^\alpha,
\]
where $C>0$ is universal. Fix $n\in\mathbb{N}$ such that $\rho^{n+1}\leq r\leq \rho^n$. Observe that
\begin{align*}
	\sup_{B_r(x_0)}\left|u(x)\,-\,u(x_0)\right|\,\leq\,\sup_{B_{\rho^n}(x_0)}\left|u(x)\,-\,u(x_0)\right|\,\leq\,\rho^{-\alpha}\rho^{(n+1)\alpha}\,\leq\,Cr^\alpha.
\end{align*}
\end{proof}

We conclude this section with a remark on double divergence equations with explicit dependence on lower order terms.

\begin{Remark}\label{remarklot1}
To extend our result to model-problems of the form
\[
	\partial^2_{x_ix_j}\left(a^{ij}(x)u(x)\right)\,+\,\partial_{x_i}\left(b^i(x)u(x)\right)\,+\,c(x)u(x)=\, 0\;\;\;\;\;\mbox{in}\;\;\;\;\;B_1,
\]
it suffices to impose two conditions on $b:B_1\to\mathbb{R}^d$ and $c:B_1\to\mathbb{R}$. Indeed, these maps must be H\"older continuous; such a requirement unlocks the uniform compactness of the solutions. Secondly, a proximity regime must be in force; that is, there must be $\overline{b}\in\mathbb{R}^d$ and $\overline{c}\in\mathbb{R}$ so that
\[
	\left\|b^i\,-\,\overline{b^i}\right\|_{L^\infty(B_1)}\,+\,\left\|c\,-\,\overline{c}\right\|_{L^\infty(B_1)}\,\ll\,1/2.
\]
\end{Remark}

In what follows we focus on the proof of Theorem \ref{thm_main2}.

\section{H\"older continuity of the gradient}\label{sec_proofthm2}

This section sets forth the proof of Theorem \ref{thm_main2}. As before, the main ingredient is a First level-set Approximation Lemma. 

\begin{Proposition}[First Level-set Approximation Lemma]\label{prop_approx2}
Let $u \in L^1_{loc}(B_1)$ be a weak solution to \eqref{eq_main} and suppose A\ref{assump_matrixa} and A\ref{assump_w2p} are in force. Given $\delta > 0$, there exists $\varepsilon > 0$ such that, if $x_0 \in S_1[u]\cap B_{9/10}$ and
\[
	\sup_{x\in B_1}\left|a^{ij}(x) - a^{ij}(x_0)\right| \,< \,\varepsilon,
\]
there exists $h \in C^{1,1}(B_{9/10})$ satisfying 
\[
	\left\|u\, -\, h\right\|_{\mathcal{C}^{1,\beta}(B_{9/10})} < \delta,
\]
for some $\beta\in(0,1)$, with
\[
h(x_0) = 0\;\;\;\;\;\;\;\;\;\;\mbox{and}\;\;\;\;\;\;\;\;\;\; Dh(x_0) = {\bf 0}.
\]
\end{Proposition}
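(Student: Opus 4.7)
The plan is to mimic the contradiction/compactness strategy of the Zero Level-set Approximation Lemma (Proposition \ref{prop_approx}), upgrading the topology from $L^\infty$ to $\mathcal{C}^{1,\beta}$. The key enabler is that, under A\ref{assump_w2p}, equation \eqref{eq_main} rewrites as the divergence-form equation \eqref{eq_divlady}, so the $\mathcal{C}^{1,\alpha}_{loc}$ estimate of Proposition \ref{prop_lady} (with $\alpha = (p-d)/p$) plays the role previously filled by the $\mathcal{C}^\alpha$ estimate of Proposition \ref{prop_compactnessol}. We fix $\beta \in (0, \alpha)$ once and for all, as this will be the exponent promised in the conclusion.

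Assume the statement is false. Then there exist $\delta_0 > 0$, a sequence of matrices $([a^{ij}_n]_{i,j=1}^d)$ satisfying A\ref{assump_matrixa} and A\ref{assump_w2p} with
\[
	\sup_{x\in B_1}\left|a^{ij}_n(x) - a^{ij}_n(x_0)\right| \leq \tfrac{1}{n},
\]
and weak solutions $u_n$ of $\partial^2_{x_ix_j}(a^{ij}_n u_n) = 0$ in $B_1$ with $x_0 \in S_1[u_n]\cap B_{9/10}$, yet no $h \in \mathcal{C}^{1,1}(B_{9/10})$ satisfying $h(x_0) = 0$ and $Dh(x_0) = {\bf 0}$ lies within $\delta_0$ of $u_n$ in $\mathcal{C}^{1,\beta}(B_{9/10})$. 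After the usual $L^\infty$-normalization, Proposition \ref{prop_lady}, combined with a covering/rescaling argument to propagate the interior estimate to the larger ball $B_{9/10}$, yields uniform $\mathcal{C}^{1,\alpha}$ bounds for $(u_n)$. The compact embedding $\mathcal{C}^{1,\alpha} \hookrightarrow \mathcal{C}^{1,\beta}$ then gives, along a subsequence, $u_n \to u_\infty$ in $\mathcal{C}^{1,\beta}(B_{9/10})$. Extracting further, $a^{ij}_n(x_0) \to \overline{a}^{ij}$ for a symmetric $(\lambda,\Lambda)$-elliptic constant matrix, and the triangle inequality delivers $\|a^{ij}_n - \overline{a}^{ij}\|_{L^\infty(B_1)} \to 0$.

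Proposition \ref{prop_seqstab} (with $f_n \equiv 0$, applied to test functions in $\mathcal{C}^2_c(B_{9/10})$) identifies $u_\infty$ as a distributional solution of $\partial^2_{x_ix_j}(\overline{a}^{ij} u_\infty) = 0$ in $B_{9/10}$. Since the coefficients are constant, this coincides with the uniformly elliptic equation $\overline{a}^{ij}\partial^2_{x_ix_j} u_\infty = 0$, whose distributional solutions are analytic; in particular $u_\infty \in \mathcal{C}^{1,1}(B_{9/10})$. The $\mathcal{C}^{1,\beta}$ convergence transfers the pointwise conditions at $x_0$: since $u_n(x_0) = 0$ and $Du_n(x_0) = {\bf 0}$ for all $n$, one has $u_\infty(x_0) = 0$ and $Du_\infty(x_0) = {\bf 0}$. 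Setting $h := u_\infty$ produces an admissible competitor with $\|u_n - h\|_{\mathcal{C}^{1,\beta}(B_{9/10})} \to 0$, contradicting the failure hypothesis for $n$ sufficiently large and closing the argument.

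The step I expect to demand the most care is the uniform $\mathcal{C}^{1,\alpha}$ compactness of $(u_n)$: one must verify that the constant in Proposition \ref{prop_lady} can be taken independent of $n$ along the perturbed sequence $(a^{ij}_n)$, and propagate the interior estimate from $B_{1/2}$ up to $B_{9/10}$ by a standard covering argument. The remaining ingredients — sequential stability, smoothness of the constant-coefficient limit, and the passage to the limit in the point conditions — are routine.
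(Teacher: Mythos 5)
Your proposal is correct and follows essentially the same route as the paper: a contradiction argument using the $\mathcal{C}^{1,\alpha}$ compactness from Proposition \ref{prop_lady}, the sequential stability of Proposition \ref{prop_seqstab}, constant-coefficient regularity for the limit, and the choice $h\equiv u_\infty$, with the point conditions at $x_0$ passing to the limit by the $\mathcal{C}^1$ convergence. Your explicit flagging of the uniformity of the constant in Proposition \ref{prop_lady} along the sequence $(a^{ij}_n)$ and of the covering step to reach $B_{9/10}$ only makes precise details the paper leaves implicit.
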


\begin{proof}
We argue by contradiction. Suppose the statement of the proposition is false, in this case there exists $\delta_0 > 0$ and sequences $\left([a^{ij}_n]_{i,j=1}^d\right)_{n\in\mathbb{N}}$, $(u_n)_{n\in\mathbb{N}}$ such that
\[
\left\|a_n^{ij}(x) - a_n^{ij}(x_0)\right\|_{L^\infty(B_1)} \,\sim\, \dfrac{1}{n},
\]
\[
x_0 \in S_1[u_n]\cap B_{9/10},
\]
and
\[
	\partial^2_{x_ix_j}\left(a_n^{ij}(x)u_n(x)\right) \,= \,0 \;\;\;\;\;\mbox{in}\;\;\;\;\; B_1,
\] 
with
\[
	|u_n(x) - h(x)|\,> \,\delta_0,
\]
 and either $h(x_0)\neq 0$ or $Dh(x_0)\neq {\bf 0}$, for every $h \in C^{1,1}(B_{9/10})$ and $n\in\mathbb{N}$. By Proposition \ref{prop_lady} we have that $(u_n)_{n\in \mathbb{N}}$ is uniformly bounded in $C^{1,\alpha}(B_1)$. Then, through a subsequence, if necessary, there exists a function $u_\infty$ such that
\[
	\left\|u_n\,-\,u_\infty\right\|_{C^{1,\gamma}(B_1)} \rightarrow 0,
\]
for every $0 \,<\,\gamma\,<\,\beta$. In particular 
\[
	u_n(x_0) \;\rightarrow \;u_\infty(x_0)\;\;\;\;\;\mbox{and}\;\;\;\;\;Du_n(x_0) \;\rightarrow \;Du_\infty(x_0).
\]
Then $u_\infty(x_0) = 0$ and $Du_\infty(x_0) = {\bf 0}$. Furthermore, $a_n^{ij}(x_0) \rightarrow \overline{a}^{ij}(x_0)$ as $n \rightarrow \infty$, hence, as before, $a_n^{ij}(x) \rightarrow \overline{a}^{ij}(x_0)$ as $n \rightarrow \infty$. 

Here, we evoke once again the sequential stability of the weak solutions, Proposition \ref{prop_seqstab}, to conclude that $u_\infty$ solves
\[
	\partial^2_{x_ix_j}\left({\overline a}^{ij}(x_0)u_\infty(x)\right) = 0 \;\;\;\;\;\mbox{in} \;\;\;\;\; B_{9/10}
\]
The regularity theory for constant coefficients implies that $u_{\infty} \in C^{1,1}(B_{9/10})$. By taking $h \equiv u_\infty$, we produce a contradiction and establish the result.
\end{proof}

\begin{Remark}\label{rem_prop8}
As in Remark \ref{remarkp5}, we notice that the norm of $h$ in $\mathcal{C}^2$ depends on the solution $u$ only through its $L^\infty$-norm.
\end{Remark}

\begin{Proposition}\label{prop_step1;2}
Let $u \in L^1_{loc}(B_1)$ be a weak solution to \eqref{eq_main} and suppose A\ref{assump_matrixa} and A\ref{assump_w2p} are in force. Then, for every $\alpha \in (0,1)$, there exists $\varepsilon >0$ such that, if $x_0 \in S_1[u]\cap B_{9/10}$ and
\[
	\sup_{x\in B_1}|a^{ij}(x) - a^{ij}(x_0)| < \varepsilon
\]
we can find $0 < \rho << 1/2$ such that
\[
	\sup_{B_\rho(x_0)}\left|Du(x)\,-\,Du(x_0)\right|\, \leq\, \rho^\alpha.
\]
\end{Proposition}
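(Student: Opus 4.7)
The plan is to follow the structure of Proposition \ref{prop_step1}, now working one order higher and exploiting the $\mathcal{C}^{1,\beta}$-approximation produced by the First Level-set Approximation Lemma. For a fixed $\alpha \in (0,1)$, I would first apply Proposition \ref{prop_approx2} to obtain, for any $\delta > 0$, a function $h \in \mathcal{C}^{1,1}(B_{9/10})$ satisfying $h(x_0) = 0$, $Dh(x_0) = {\bf 0}$, and
\[
\|u - h\|_{\mathcal{C}^{1,\beta}(B_{9/10})} < \delta,
\]
provided the coefficients obey the proximity regime with a small $\varepsilon = \varepsilon(\delta) > 0$. By Remark \ref{rem_prop8}, the $\mathcal{C}^{1,1}$-norm of $h$ is controlled by a universal constant $C$ depending only on $d$, $\lambda$, $\Lambda$, and $\|u\|_{L^\infty(B_1)}$, but not on $u$ itself.

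Next, since $x_0 \in S_1[u]$ gives $Du(x_0) = {\bf 0}$ and $Dh(x_0) = {\bf 0}$ by construction, for every $x \in B_\rho(x_0)$ with $0 < \rho \ll 1/2$ I would split
\[
|Du(x) - Du(x_0)| \leq |Du(x) - Dh(x)| + |Dh(x) - Dh(x_0)| + |Dh(x_0) - Du(x_0)| \leq \delta + C\rho,
\]
where the first and third terms are controlled by the $\mathcal{C}^{1,\beta}$-closeness of $u$ and $h$, and the middle term uses the Lipschitz estimate on $Dh$ inherited from the $\mathcal{C}^{1,1}$-bound.

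To close the argument at the prescribed exponent $\alpha$, I would make the universal choices
\[
\rho := \left(\frac{1}{2C}\right)^{1/(1-\alpha)} \quad \mbox{and} \quad \delta := \frac{\rho^\alpha}{2},
\]
which gives $\delta + C\rho \leq \rho^\alpha$ and, through Proposition \ref{prop_approx2}, fixes $\varepsilon$ as a constant depending on $\alpha$, $d$, $\lambda$, $\Lambda$, and $\|u\|_{L^\infty(B_1)}$. The main technical point is ensuring that the approximant $h$ simultaneously annihilates its value and gradient at $x_0$; this is precisely where assumption A\ref{assump_w2p} enters, since Proposition \ref{prop_lady} upgrades the compactness from $\mathcal{C}^0$ to $\mathcal{C}^{1,\alpha}$ and thereby permits the $\mathcal{C}^{1,\beta}$-approximation in Proposition \ref{prop_approx2} instead of the merely uniform one used in the proof of Theorem \ref{thm_main}.
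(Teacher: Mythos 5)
Your proposal is correct and follows essentially the same route as the paper: invoke the First Level-set Approximation Lemma (Proposition \ref{prop_approx2}) together with Remark \ref{rem_prop8}, split $|Du(x)-Du(x_0)|$ by the triangle inequality through $Dh$ (the last term vanishing since $Du(x_0)=Dh(x_0)={\bf 0}$), and close with the universal choices $\rho=(1/(2C))^{1/(1-\alpha)}$ and $\delta=\rho^\alpha/2$. This matches the paper's argument in both structure and constants.
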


\begin{proof}
By Proposition \ref{prop_approx2}, there exists $h \in C^{1,1}(B_1)$ such that
\[
	\|u\,-\,h\|_{\mathcal{C}^{1,\beta}(B_{9/10})} \,< \,\delta  
\]
with $x_0\in S_1[u]\cap B_{9/10}$. 
We have
\begin{align*}
\sup_{B_\rho(x_0)}\left|Du(x)\,-\,Du(x_0)\right|&\leq \sup_{B_\rho(x_0)}\left|Du(x)-Dh(x)\right|+\sup_{B_\rho(x_0)}\left|Dh(x)-Dh(x_0)\right|\\&\quad+\sup_{B_\rho(x_0)}\left|Dh(x_0)-Du(x_0)\right|\\&\leq \delta\,+\,C\rho
\end{align*}

Now, by choosing 
\[
\rho := \left(\dfrac{1}{2C}\right)^{\frac{1}{1-\alpha}} \;\;\;\;\;\mbox{and} \;\;\;\;\; \delta := \dfrac{\rho^{\alpha}}{2},
\]
we obtain
\[
	\sup_{B_\rho(x_0)}\left|Du(x)\,-\,Du(x_0)\right|\, \leq\, \rho^{\alpha}
\]
and finish the proof.
\end{proof}

\begin{Proposition}
Let $u \in L^1_{loc}(B_1)$ be a weak solution to \eqref{eq_main} and suppose A\ref{assump_matrixa} and A\ref{assump_w2p} are in force. Then, there exists $\varepsilon > 0$ such that, if $x_0 \in S_1[u]\cap B_{9/10}$ and
\[
	\sup_{x\in B_1}\left|a^{ij}(x) \,-\, a^{ij}(x_0)\right|\, <\, \varepsilon,
\]
we can find $0 < \rho << 1/2$ for which
\[
	\sup_{B_{\rho^n}(x_0)}\left|Du(x)\,-\,Du(x_0)\right|\, \leq \,\rho^{n\alpha},
\]
for every $n \in \mathbb{N}$ and every $\alpha\in(0,1)$.
\end{Proposition}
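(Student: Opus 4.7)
The plan is a direct induction on $n$, paralleling the proof of Proposition \ref{prop_finalstep}. The base case $n=1$ is precisely Proposition \ref{prop_step1;2}; the universal choices of $\rho$ and $\delta$ made in that proposition fix the smallness parameter $\varepsilon$ once and for all, and this same $\varepsilon$ will be used at every scale.

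For the inductive step, assume the estimate holds for some $k\in\mathbb{N}$. Since $x_0\in S_1[u]$, we have $u(x_0)=0$ and $Du(x_0)=\mathbf{0}$, so it is natural to normalize by setting
\[
	v_k(x)\,:=\,\frac{u(x_0+\rho^k x)}{\rho^{k(1+\alpha)}},\qquad x\in B_1.
\]
Then $v_k(0)=0$ and $Dv_k(x)=\rho^{-k\alpha}Du(x_0+\rho^k x)$, so $Dv_k(0)=\mathbf{0}$; in particular $0\in S_1[v_k]\cap B_{9/10}$. A standard chain-rule computation shows that $v_k$ is a weak solution to
\[
	\partial^2_{x_ix_j}\bigl(a^{ij}_k(x)\,v_k(x)\bigr)\,=\,0\;\;\;\mbox{in}\;\;\;B_1,\qquad a^{ij}_k(x)\,:=\,a^{ij}(x_0+\rho^k x).
\]
The rescaled matrix $(a^{ij}_k)$ inherits both the $(\lambda,\Lambda)$-ellipticity and the $W^{2,p}_{\text{loc}}$-regularity of $(a^{ij})$, so Assumptions A\ref{assump_matrixa} and A\ref{assump_w2p} are preserved. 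Since $\rho<1$, the proximity regime is also preserved:
\[
	\sup_{x\in B_1}\bigl|a^{ij}_k(x)-a^{ij}_k(0)\bigr|\,\leq\,\sup_{y\in B_1}\bigl|a^{ij}(y)-a^{ij}(x_0)\bigr|\,<\,\varepsilon.
\]

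Thus Proposition \ref{prop_step1;2} is applicable to $v_k$ at the origin, giving
\[
	\sup_{B_\rho(0)}\bigl|Dv_k(x)-Dv_k(0)\bigr|\,\leq\,\rho^{\alpha}.
\]
Unwinding the definition of $v_k$ via $Dv_k(x)=\rho^{-k\alpha}Du(x_0+\rho^k x)$ translates this back to
\[
	\sup_{B_{\rho^{k+1}}(x_0)}\bigl|Du(y)-Du(x_0)\bigr|\,\leq\,\rho^{(k+1)\alpha},
\]
which closes the induction.

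I do not expect a serious obstacle. The only bookkeeping items are the chain-rule verification that $v_k$ still solves a double-divergence equation in the rescaled coefficients, and the observation that $(a^{ij}_k)$ inherits ellipticity, Sobolev differentiability, and the smallness condition automatically because we shrink onto a ball contained in the original domain. The normalization exponent $1+\alpha$ is forced by the requirement that $Dv_k$ scale by exactly $\rho^{-k\alpha}$, so that the inductive hypothesis yields $\sup_{B_1}|Dv_k|\leq 1$ and the one-step decay of Proposition \ref{prop_step1;2} rescales cleanly to the next dyadic level.
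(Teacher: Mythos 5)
Your argument is correct and follows essentially the same route as the paper: induction with base case Proposition \ref{prop_step1;2}, the rescaling $v_k(x)=u(x_0+\rho^k x)/\rho^{k(1+\alpha)}$, verification that $0\in S_1[v_k]$ and that the rescaled coefficients keep ellipticity, $W^{2,p}$-regularity and the $\varepsilon$-proximity, then one application of Proposition \ref{prop_step1;2} and rescaling back. The only difference is cosmetic: the paper spells out the scaling computation showing $\|a^{ij}_k\|_{W^{2,p}(B_1)}$ stays bounded (using $p>d$), which you assert without the explicit integrals.
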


\begin{proof}
We shall verify the proposition by induction. Notice that Proposition \ref{prop_step1;2} amounts to the first step in the induction argument. Suppose we have verified the statement for $n=k$. It remains to verify it in the case $n=k+1$. Define the function 
\[
	v_k(x) \,:=\, \dfrac{u(x_0\,+\,\rho^kx)}{\rho^{k(1+\alpha)}}.
\]
We start by noting that $0\in S_1[v_k]$. Besides, $v_k$ solves
\begin{equation}\label{eq_vkgrad}
	\partial^2_{x_ix_j}\left(a_k^{ij}(x)v_k(x) \right) \,=\, 0\;\;\;\;\;\mbox{in}\;\;\;\;\;B_1,
\end{equation}
where
\[
	a^{ij}_k(x)\,:=\,a^{ij}(x_0\,+\,\rho^kx).
\]
It is clear that, 
\[
\int_{B_1}|a^{ij}(x_0+\rho^k x)|^p dx = \dfrac{1}{\rho^{dk}}\int_{B_{\rho^k}(x_0)}|a^{ij}(y)|^p dy < C,
\]
where the inequality follows from  A\ref{assump_matrixa}. Also,
\[
\int_{B_1}|D(a^{ij}(x_0+\rho^k x))|^p dx = \rho^{k(p-d)}\int_{B_{\rho^k}(x_0)}|Da^{ij}(y)|^p dy < C,
\] 
since $p>d$, by hypothesis. Similarly
\[
\int_{B_1}|D^2(a^{ij}(x_0+\rho^k x))|^p dx = \rho^{k(2p-d)}\int_{B_{\rho^k}(x_0)}|D^2a^{ij}(y)|^p dy < C.
\]
Hence, \eqref{eq_vkgrad} falls within the scope of Proposition \ref{prop_step1;2}. Therefore 
\[
\sup_{B_\rho}\left|Dv_k(x)\,-\,Dv_k(0)\right|\, \leq\, \rho^{\alpha}.
\]
Re-scaling back to the unit ball, the former inequality implies
\[
\sup_{B_{\rho^{k+1}}(x_0)}\left|Du(x)\,-\,Du(x_0)\right|\, \leq\, \rho^{(k+1)\alpha}.
\]
This completes the proof.
\end{proof}

\begin{proof}[Proof of Theorem \ref{thm_main2}]
The proof follows the general lines of proof of Theorem \ref{thm_main} and will be omitted.
\end{proof}

\begin{Remark}\label{remarklot2}
As with in the previous case is possible to extend this result to model-problems of the form
\[
	\partial^2_{x_ix_j}\left(a^{ij}(x)u(x)\right)\,+\,\partial_{x_i}\left(b^i(x)u(x)\right)\,+\,c(x)u(x)=\,f(x)\;\;\;\;\;\mbox{in}\;\;\;\;\;B_1.
\]
As before, it suffices to impose two conditions on $b:B_1\to\mathbb{R}^d$ and $c:B_1\to\mathbb{R}$. Indeed, the map b must be $W^{1,p}(B_1)$, and the map c must be $L^p(B_1)$, $p > d$; such a requirement unlocks the uniform compactness of the solutions. Secondly, a proximity regime must be in force; that is, there must be $\overline{b}\in\mathbb{R}^d$ and $\overline{c}\in\mathbb{R}$ so that
\[
	\left\|b^i\,-\,\overline{b^i}\right\|_{W^{1,p}(B_1)}\,+\,\left\|c\,-\,\overline{c}\right\|_{L^\infty(B_1)}\,\ll\,1/2.
\]

\end{Remark}

\bibliography{biblio}
\bibliographystyle{plain}

\bigskip

\noindent\textsc{Raimundo Leit\~ao}\\
Universidade Federal Ceara\\
Department of Mathematics\\
CE-Brazil 60455-760\\
\noindent\texttt{rleitao@mat.ufc.br}

\bigskip

\noindent\textsc{Edgard A. Pimentel (Corresponding Author)}\\
Department of Mathematics\\
Pontifical Catholic University of Rio de Janeiro -- PUC-Rio\\
22451-900, G\'avea, Rio de Janeiro-RJ, Brazil\\
\noindent\texttt{pimentel@puc-rio.br}

\bigskip

\noindent\textsc{Makson S. Santos}\\
Department of Mathematics\\
Pontifical Catholic University of Rio de Janeiro -- PUC-Rio\\
22451-900, G\'avea, Rio de Janeiro-RJ, Brazil\\
\noindent\texttt{makson@mat.puc-rio.br}

\end{document}